\newcolumntype{?}{!{\vrule width 0.35mm}}
\newcommand{\zh}{\widehat z}
\newcommand{\muh}{\widehat \mu}
\newcommand{\fh}{\widehat f}
\newcommand\Fc{\mathcal F}
\newcommand\lbd{\delta_{\rm lbl}}
\newcommand\altmin{\textsc{AltMin}\xspace}
\newcommand{\rhomin}{\rho_{\min}}
\newcommand{\Hb}{\mathbb{H}}
\newcommand{\Xc}{\mathcal{X}}
\newcommand{\T}{\mathsf{T}}
\definecolor{color1}{rgb}{0.10588235, 0.61960784, 0.46666667}
\definecolor{color2}{rgb}{0.85098039, 0.37254902, 0.00784314}
\definecolor{color3}{rgb}{0.45882353, 0.43921569, 0.70196078}
\definecolor{color4}{rgb}{0.90588235, 0.16078431, 0.54117647}
\begin{document}

\title{Step and Smooth Decompositions as Topological Clustering} 

\author{Luciano Vinas\footnote{lucianovinas@g.ucla.edu} \;\text{and}\; Arash A.~Amini\footnote{aaamini@stat.ucla.edu} \\[3ex]
{\small Department of Statistics}\\
{\small Univeristy of California, Los Angeles}\\[2ex]
}

\maketitle

\begin{abstract}
We investigate a class of recovery problems for which observations are a noisy combination of continuous and step functions. These problems can be seen as non-injective instances of non-linear ICA with direct applications to image decontamination for magnetic resonance imaging. Alternately, the problem can be viewed as clustering in the presence of structured (smooth) contaminant. 
We show that a global topological property (graph connectivity) interacts with a local property (the degree of smoothness of the continuous component) to determine conditions under which the components are identifiable.
Additionally, a practical estimation algorithm is provided for the case when the contaminant lies in a reproducing kernel Hilbert space of continuous functions. Algorithm effectiveness is demonstrated through a series of simulations and real-world studies.
\end{abstract}

\section{Introduction}
The prototypical recovery problem is nonparametric regression where we observe an unknown function 
corrupted by additive white noise:
$y_i = f^*(x_i) + \varepsilon_i$, for $i=1,\dots,n$, 
where $f^*$ belongs to some function class $\mathcal{F}$ and $\varepsilon_i$ is 
the measurement noise.
Important to the recovery is the structure of $\mathcal{F}$ and how it can be leveraged to differentiate observations from noise. Examples of previously explored structures in nonparametric regression include: smoothness~\citep{tsybakov2009nonparametric}, sparsity~\citep{Wainwright09, bickel2009simultaneous}, homogeneity~\citep{Ke13}, and piecewise 
simplicity~\citep{Kim09,Tibshirani14}. In each of these problems, there is a particular interest in uncovering the structure-specific recovery conditions under which a finite-sample, data-estimate $\widehat{f}$ eventually recovers the optimal, data-generating $f^*$.

Another flavor of recovery problems include decompositions of the form 
\begin{equation}\label{eq:decomp}
y_i = f^*(x_i) + g^*(x_i) + \varepsilon_i,    
\end{equation}
where the recovery quantities of interest include both $f^*$ and $g^*$. Naturally, this type of recovery problem, with its multiple recoverable quantities, is more difficult than basic nonparametric regression. 
Examples of such decompositions 
with provable recovery guarantees
are rare but some notable examples include 
the case of sparse plus low-rank matrix recovery~\citep{Chandrasekaran09,Bahmani15,Tanner23} and 
compressed sensing 
in a pair of
orthogonal bases~\citep{Donoho13}.

In this paper, we consider a nonparametric decomposition 
of the form~\eqref{eq:decomp} where the signal is
a combination of continuous and step 
functions. We provide identifiability conditions for the continuous and 
step functions $f^*$ and $g^*$ 
in terms of the modulus of continuity of $f^*$ and the height between steps in $g^*$. Analysis of $f^*$ and $g^*$ will be sufficiently general, where each function is considered to be a mapping from a metric space $(\mathcal{X},\,d)$ to a normed vector space $(\mathcal{Y},\,\norm{\cdot})$.

In its simplest formulation, we consider $f^*$ to be real-valued and continuous, lying in a Hilbert-norm $R$-ball of a reproducing kernel Hilbert space (RKHS). For this scenario, a practical estimation algorithm is proposed with consistency guarantees given in terms of spectral quantities related to the observed kernel matrix of the RKHS. 

As in most regression analysis, we conduct our analysis under finite sampling constraints. For $g^*$ which attains at most $M$ unique values within a given sample, the composite observations will be re-expressed as
\begin{equation}\label{eq:dgp}
    y_i = f^*(x_i) + \mu^*_{z_i^*} + \varepsilon_i,\quad\text{for}\;i =1 ,\ldots,n
\end{equation}
where $\bm \mu^* \in \mathbb{R}^M$ is a vector of values referred to as the levels of $g^*$, and $z_i^*\in[M]$ are labels to the corresponding levels of $g^*$. Our main goal is to recover the labels $z_i^*$ correctly, with a  secondary goal of recovering the levels $\bm \mu^*$ and the continuous function $f^*$. 
For our finite sample setting, recovery of $f^* \in \mathcal{F}$ will be relaxed to finding an element of the equivalence class
\begin{equation}
    [f^*]_n = \bigl\{f\in\mathcal{F}:\;f(x_i) = f^*(x_i),\; \forall i\in[n]\bigr\}.
\end{equation}
This recovery condition may be refined to instead selecting a representative solution from $[f^*]_n$, such as a minimum-norm solution.
An approach of this sort will depend on the regularity available in the function space $\mathcal F$
and will not be a topic of focus in our forthcoming analysis.

\subsection{Applications}

To motivate the problem, let us give some concrete applications of the step and smooth decomposition model~\eqref{eq:dgp}.

\subsubsection*{Decompositions in Non-linear ICA}\label{sec:mixing}

Non-linear independent component analysis (ICA)~\citep{Hyvarinen99} provides a general framework to describe signal mixing problems. In non-linear ICA, the mixed observation $\bm y = \psi(\bm s)$ is generated using independent, latent sources $\bm s\in\reals^n$ and a non-linear, mixing function $\psi:\reals^n\to\reals^n$. In other ICA formulations~\citep{Hyvarinen19}, joint independence of $\bm s$ is relaxed to a conditional independence given some auxiliary information $\bm u\in\reals^n$. That is,
\begin{equation*}
    \log p(\bm s| \bm u) = \sum_{i=1}^n q_i(s_i, \bm u)
\end{equation*}
for appropriately defined densities $q_i$. 

Decomposition~\eqref{eq:dgp} can be understood in terms of a self-mixing, non-linear ICA problem. In the simplest scenario, we may consider sources $s_i = (x_i, u_i)$ with auxiliary information $u_i\sim {\rm Unif}[0,1)$ and mixing defined by
\begin{equation}\label{eq:mix}
    \psi(s_i) = f^*(x_i) + \mu^*_{\phi(x_i,u_i)}\quad\text{where}\quad \phi(x_i,u_i) = \lfloor u_i\cdot M \rfloor + 1.
\end{equation}
Generalizations to~\eqref{eq:mix} may consider different cut-off functions $\phi(x_i,u_i)$ which also incorporate sample spatial information $x_i$ in their cut-offs.

In contrast to traditional ICA problems, the mixing function defined in~\eqref{eq:mix} is not necessarily injective on $\reals^n$ for all choices of $f^*$ and $\phi$. This a recovery setting not covered in recent non-linear ICA literature~\citep{Hyvarinen19,Khemakhem20,Zheng2022} and one we are interested in exploring in this paper. In particular, when given partial information $\{(x_i, y_i)\}_i$, which properties of the data, if any at all, can help overcome the non-injectivity of a general $f^*$ and $\mu_\phi^*$?

\subsubsection*{Decompositions in Medical Image Correction}

In magnetic resonance imaging (MRI), image quality can be affected by factors ranging from radiofrequency coil setup to patient positioning and geometry~\citep{Asher10}. Dependent on these factors, MRI images may be contaminated with a spatially smooth, multiplicative field, known as the bias field. Figure~\ref{fig:bias} illustrates an example of a contaminated MRI image.

\begin{figure}[t]
    \centering
    \includegraphics[width=0.65\linewidth]{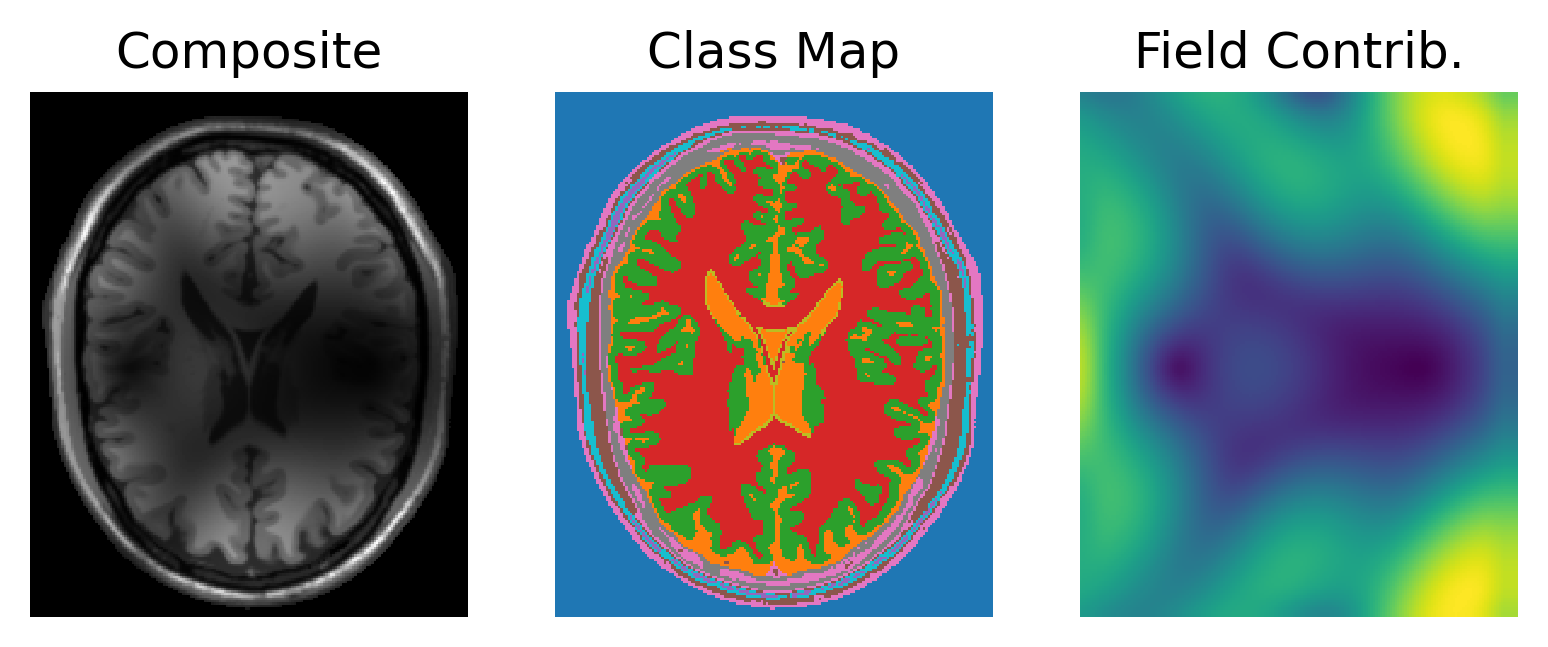}
    \caption{Example of a prominent bias field modifying the BrainWeb~\cite{Cocosco97} phantom.}
    \label{fig:bias}
\end{figure}

The MRI bias field problem admits the following multiplicative formulation~\citep{Vovk07},
\begin{equation}\label{eq:mri_bias}
    y(x) = f^*(x)\cdot \mu^*(x),\quad\text{for}\;x\in\mathcal{X}
\end{equation}
where $f^*$ is a positive smooth field on $\mathcal{X}$, and $\mu^*(x)$ are, by convention, positive tissue values at locations $x\in\mathcal{X}$. Given a fixed number of tissues classes $M$, process \eqref{eq:mri_bias} can be reformulated as \eqref{eq:dgp} under a $\log$-transformation.

In supervised learning tasks, the visual inconsistencies caused by MRI bias fields present significant challenges, as they prevent the acquisition of accurate ground truth signal information from patient scans. This issue parallels the earlier discussed problem of non-linear ICA, where, again, learning is hampered due to partial information and concerns regarding injectivity.

\subsection{Prior Work}

To the authors' best knowledge, the closest work on the theory of continuous and step decompositions is~\cite{Kim14}, 
where they provide a characterization of the set of viable functions given an observed composite signal $h^*$. The composite $h^* = f^*\cdot g^*$ is assumed to be the product of a positive continuous function $f^*$ and a positive step-wise function $g^*$. 
Assuming knowledge of the tissue ratios $\{\mu_k/\mu_{k+1}\}_{k=1}^{M-1}$,~\cite{Kim14} have shown that one 
there are scalars $\{a_k\}_{k=1}^M$ such that the set
\begin{equation*}
    \widetilde{\mathcal{F}} = \bigl\{f:\mathcal{X}\to\mathbb{R}:\forall x\in \mathcal{X},\,f(x)\in \{a_k h^*(x)\}_{k=1}^M\bigr\}
\end{equation*}
contains a unique scalar multiple of $f^*$. This result is then followed by a practical algorithm which optimizes over a soft-label surrogate of $\widetilde{\mathcal{F}}$.

The theoretical result of~\cite{Kim14} is 
interesting since it dramatically reduces the search space for a viable $f$, esp. when $\mathcal X$ is finite.
What this result does not tell us is how to identify $f^*$ in the set $\widetilde{\mathcal{F}}$, and whether $f^*$ is identifiable at all. This issue becomes readily apparent in finite sample scenarios, where 
there may be multiple ways to construct observations 
$h^*$ 
from different smooth-and-step pairs $(\tilde{f}, \tilde{g})$.
In short, the work of \cite{Kim14} does not address the question of identifiability which is a 
focus of our work. Moreover, when no level information is available, $\widetilde{\mathcal{F}}$ itself is unknown. 
In this regime, attempts to approximate the set $\widetilde{\mathcal{F}}$ would ultimately be sensitive to initialization choice for scale parameters $\{a_k\}_k$.



\section{Identifiability Theory}\label{sec:recovery}
Let $\mathcal{F}_\omega(\mathcal{X})$ be a vector space of real-valued, uniformly continuous functions with \textit{modulus of continuity} $\omega:[0,\infty)\to[0,\infty)$ over the metric space $(\mathcal{X}, d)$. That is to say, 
\begin{equation}\label{eq:Fc:omega:def}
    \Fc_\omega(\Xc) = \bigl\{f:\mathcal{X}\to\mathcal{Y}:\; \norm{f(x)-f(x')}\leq \omega(d(x,x')),\;\forall x,x'\in\mathcal{X}\bigr\}.
\end{equation}
The results of this section will be presented for outputs in $(\reals,\, |\cdot|)$, but readers interested in a more general formulation may refer to Appendix~\ref{app:ident:proofs}.

We consider the problem of identifying components $(f^*, \bm \mu^*, \bm z^*)$ from observations
\begin{equation}\label{eq:noiseless}
    y_i = f^*(x_i) + \mu^*_{z_i^*},\quad\text{for}\;i =1 ,\ldots,n,
\end{equation}
assuming that $f^*$ is an element of the function space $\mathcal{F}_\omega(\mathcal{X})$.
Given the commutative nature of addition, correctly identifying $(\bm \mu^*, \bm z^*)$ will automatically identify an element of $[f^*]_n$. In order to recover the true triplet $(f^*, \bm \mu^*, \bm z^*)$ we consider solving the optimization
\begin{equation}\label{opt:recovery}
    (\widehat f, \bm{\widehat \mu}, \bm{\widehat z}) = \argmin_{\substack{f \in \Fc_\omega(\mathcal{X}),\\ \bm \mu \in \mathbb{R}^M,\;\bm z\in [M]^n}}\; \frac{1}{n}\sum_{i=1}^n(y_i - \mu_{z_i} - f(x_i))^2,
\end{equation}
and its \emph{zero-mean} version where a constraint is added ensuring that $f$ is empirically zero-mean, i.e., $\sum_{i=1}^n f(x_i) = 0$. This zero-mean constraint addresses issues analogous to the scalar multiple problem described by~\cite{Kim14}. 
By providing conditions under which~\eqref{opt:recovery} unambiguously recovers the sampled clusters $\{\mu_{z_i^*}^*\}_i$, we will have shown identifiability for the step and smooth decomposition~\eqref{eq:noiseless}.

We start by recalling the definition of a $\rho$-neighbor graph associated with a point cloud $X = \{x_i\}_{i=1}^n$ lying in a metric space $(\mathcal{X}, d)$:
\begin{defn}[Neighbor Graph]
The $\rho$-neighbor graph 
$G_\rho(X)$ of point cloud $X = \{x_i\}$ is the graph with vertex set 
$[n]$
and edge set
\[
\bigl\{(i,j)\in [n]^2:\, i\neq j\;{\rm and}\;d(x_i,x_j)\leq \rho\bigr\}.
\]
\end{defn}
The $\rho$-neighbor graph captures some aspect of the topology of the point cloud. 
Paired with the modulus of continuity $\omega$, this graph allows us to quantify long-range variation of a particular $f^* \in\Fc_\omega(\Xc)$ via its local variations along the edges. 
%
Control over $\Fc_\omega(\Xc)$, and by extension $f^*$, will rely on the connectedness of $G_\rho(X)$ and the size of the neighborhood distance $\rho$. Therefore, to every point cloud $X$, we associate a connectivity parameter: 
\begin{defn}[Connectivity]
For a point cloud $X$, the connectivity is defined as
\begin{align*}
    \rhomin(X) := \inf\bigl\{ \rho > 0: \; \text{$G_\rho(X)$ is connected} \bigr\}.
\end{align*}
\end{defn}

\begin{figure}[t]
    \centering
    \begin{subfigure}{0.49\linewidth}
        \centering
        \includegraphics[width=0.95\linewidth]{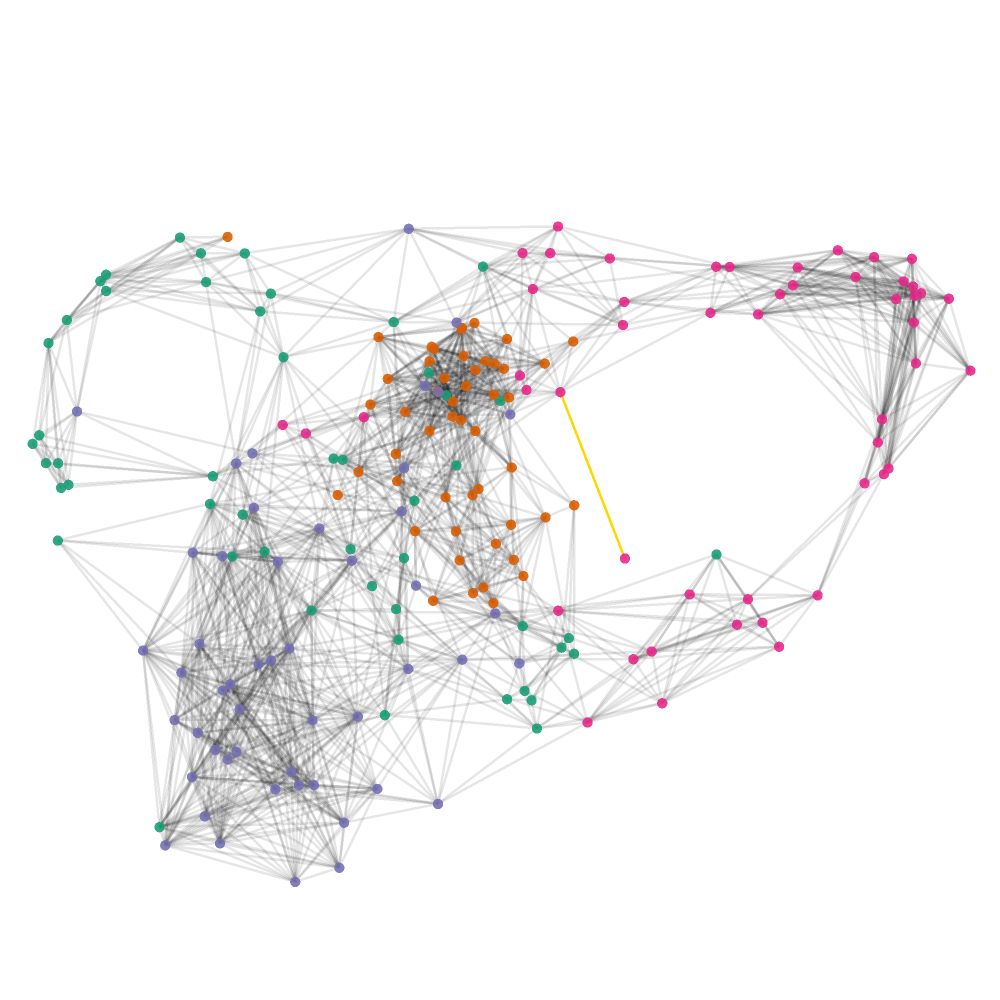}
        \caption{$\rho$-neighbor connectivity graph}
    \end{subfigure}
    \begin{subfigure}{0.49\linewidth}
        \centering
        \includegraphics[width=0.95\linewidth]{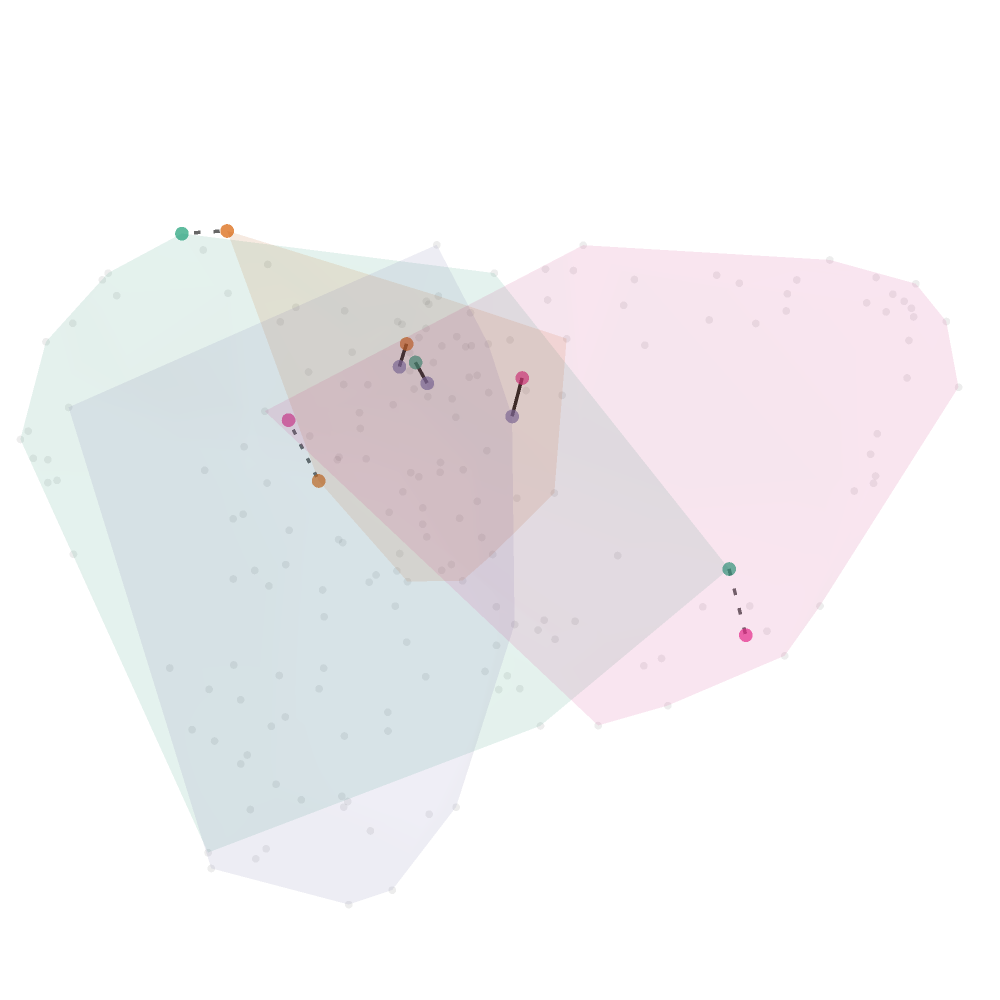} 
        \caption{Cluster connectivity graph}
    \end{subfigure}
    \caption[Caption for LOF]{A $\rho$-neighbor and cluster connectivity graph on UMAP-reduced features for four topics from the ``20 Newsgroups" classification dataset. Highlighted in gold in the left subfigure is an edge with length equal to connectivity $\rho_{{\rm min}}$.  Drawn in black in the right subfigure are the corresponding cluster distance edges $d(\mathcal{C}_k, \mathcal{C}_\ell)$. Cluster graph edges which are larger than $\delta_{\rm lbd}$ are drawn in dashed. The final cluster graph $G_{\lbd}(\mathcal{C})$ is a tree \tikz[baseline, every node/.style={shape=circle,fill=black,circle,inner sep=1.1pt,draw=black,line width=0.05pt}]{%
\node[fill=color3,yshift=0.6mm] (0) at (0,0) {};
\node[fill=color2,yshift=0.6mm] (1) at (0,0.2) {};
\node[fill=color1, yshift=0.6mm] (2) at (-0.1732, -0.1) {};
\node[fill=color4,yshift=0.6mm] (3) at (0.1732, -0.1) {};
\draw[line width=0.6pt] (0) -- (1);
\draw[line width=0.6pt] (0) -- (2);
\draw[line width=0.6pt] (0) -- (3);
} with a connecting hub at the blue colored cluster.\protect\footnotemark}
    \label{fig:nonunif_ex}
\end{figure}

\footnotetext{For interactive 3D network representation: \href{https://github.com/lucianoAvinas/topological-clustering-plots}{https://github.com/lucianoAvinas/topological-clustering-plots.}}

\nocite{McInnes18,Lang95}

\newcommand\cluster{\mathcal C}

Similar to how $\rhomin(X)$-neighbor graph
associates deviations in the smooth component $f^*$ with traversals between neighboring nodes 
in the point cloud, we  associate deviations in the step component with traversals along the edges of a graph. This graph's nodes represent clusters induced by $\bm z^*$, namely, $\mathcal{C} = \{\cluster_k\}_{k\in [M]}$ where $\cluster_k = \{i \in [n]: z^*_i = k\}$ is the cluster corresponding to label $k$. 
Traversal between clusters will be measured with respect to the following \emph{cluster distance}:
\begin{equation}\label{eq:pairwise_clust}
    d(\cluster_k, \cluster_\ell)  \coloneqq
    \min_{i \in \cluster_k, \, j \in \cluster_\ell} 
    d(x_i, x_j).
\end{equation}
The pairwise distances of $\mathcal{C}$, although not corresponding to a metric space, can be used with a tolerance $\delta > 0$ to construct a neighbor graph $G_\delta(\mathcal{C})$, with the edge set
\[
\bigl\{ (\cluster_k, \cluster_\ell):\; k\neq \ell \; \text{and} \;\; d(\cluster_k, \cluster_\ell) \le \delta \bigr\}.
\]
\begin{defn}[Label distance]
    The label distance for paired data $(X,\bm z^*)$ is
    \begin{equation}
        \lbd(X,\bm z^*) \coloneqq \inf\bigl\{ \delta > 0: \; \text{$G_\delta(\mathcal{C})$ is connected} \bigr\}.
    \end{equation}
\end{defn}

When clear from context, dependence on sample $(X, \bm z^*)$ will be omitted from all defined terms. 
Figure~\ref{fig:nonunif_ex} shows an example of a $\rho$-neighbor graph and the associated quantities.


\medskip
Our main result is the following  
cluster recovery guarantee:
\begin{thm}[Cluster recovery]\label{thm:misclass:v1}
    Let $X = \{x_i\}_{i=1}^n$ be a point cloud in a metric space $(\mathcal{X}, d)$ and let $\{y_i\}_{i=1}^n$ follow  model~\eqref{eq:noiseless}
    with $f^* \in \Fc_\omega(\Xc)$ and $\bm z^* \in [M]^n$. 
    If the connectivity $\rhomin$ of $X$ satisfies 
    \begin{align}\label{eq:cluster:recovery:condition}
        \omega(\rhomin) < \frac1{2M}\min_{k \neq \ell} |\mu_k^*-\mu_\ell^*|,
    \end{align}
    then, the labels $\bm \zh$ produced by~\eqref{opt:recovery} have zero misclassification error relative to $\bm z^*$.
\end{thm}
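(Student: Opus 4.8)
The plan is to first observe that the ground-truth triple $(f^*,\bm\mu^*,\bm z^*)$ is feasible for~\eqref{opt:recovery} and drives the objective to $0$, so every minimizer $(\widehat f,\bm{\widehat\mu},\bm{\widehat z})$ also attains objective value $0$ and hence reproduces the data exactly: $\widehat f(x_i)+\widehat\mu_{\widehat z_i}=y_i=f^*(x_i)+\mu^*_{z^*_i}$ for all $i$. I would then study the residual $h(x_i):=\widehat f(x_i)-f^*(x_i)=\mu^*_{z^*_i}-\widehat\mu_{\widehat z_i}$, which has two faces: as a difference of elements of $\Fc_\omega(\Xc)$ it obeys $|h(x_i)-h(x_j)|\le 2\omega(d(x_i,x_j))$, while as a function of the label pair $(z^*_i,\widehat z_i)$ it takes only finitely many values. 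Combining the first face with connectivity of $G_{\rhomin}(X)$ upgrades the local bound to a statement about the sorted distinct values of $h$: if $t<t'$ are consecutive distinct values, then $\{i:h(x_i)\le t\}$ and $\{i:h(x_i)\ge t'\}$ are nonempty and partition $[n]$, so some edge of $G_{\rhomin}(X)$ crosses this cut and therefore $t'-t\le 2\omega(\rhomin)$.

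I would then argue by contradiction, assuming $\bm{\widehat z}$ does not induce the same partition of $[n]$ as $\bm z^*$ (which induces exactly $M$ clusters). Since $\bm{\widehat z}\in[M]^n$ induces at most $M$ clusters, if it refined $\bm z^*$ it would have to equal it; as the partitions differ, $\bm{\widehat z}$ does not refine $\bm z^*$, so some estimated cluster contains indices $i,j$ with $z^*_i\ne z^*_j$. Call such an $(i,j)$ a \emph{bad pair}, and among all bad pairs fix one minimizing $|\mu^*_{z^*_i}-\mu^*_{z^*_j}|$. Writing $k_0=z^*_i$, $k_0'=z^*_j$, $\ell_0=\widehat z_i=\widehat z_j$, and (say) $a:=h(x_i)=\mu^*_{k_0}-\widehat\mu_{\ell_0}>b:=h(x_j)=\mu^*_{k_0'}-\widehat\mu_{\ell_0}$, we have $a-b=|\mu^*_{k_0}-\mu^*_{k_0'}|\ge\min_{k\ne\ell}|\mu^*_k-\mu^*_\ell|$.

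The heart of the proof is to bound the number of distinct values $h$ takes inside $[b,a]$. Each such value equals $\mu^*_k-\widehat\mu_\ell$ for a label pair $(k,\ell)$ realized by some index. For $\ell=\ell_0$: a realized pair $(k,\ell_0)$ with $\mu^*_k$ strictly between $\mu^*_{k_0'}$ and $\mu^*_{k_0}$ would, together with $i$, form a bad pair of strictly smaller level gap, contradicting minimality; so $\ell_0$ contributes only $a$ and $b$. For each of the at most $M-1$ other realized labels $\ell$: if two realized pairs $(k,\ell)$, $(k',\ell)$ produced distinct values inside $[b,a]$, then $|\mu^*_k-\mu^*_{k'}|\le a-b$, hence $=a-b$ by minimality, forcing those two values to be $a$ and $b$ themselves; thus each such $\ell$ contributes at most one value in the open interval $(b,a)$. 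Hence $h$ takes at most $2+(M-1)=M+1$ distinct values in $[b,a]$; writing them $b=t_0<\cdots<t_r=a$ with $r\le M$ and summing the consecutive-gap bound from the first paragraph gives $a-b\le 2M\,\omega(\rhomin)$, which together with $a-b\ge\min_{k\ne\ell}|\mu^*_k-\mu^*_\ell|$ contradicts~\eqref{eq:cluster:recovery:condition}. Therefore no bad pair exists and $\bm{\widehat z}$ matches $\bm z^*$ up to relabeling. I expect this counting step to be the main obstacle — in particular, recognizing that passing to a \emph{level-minimal} bad pair is what keeps the number of realized label pairs contributing to $[b,a]$ linear rather than quadratic in $M$, which is precisely what the constant $\tfrac1{2M}$ requires; the other ingredients (exact fit at the optimum, the triangle inequality for $h$, and the cut argument) are routine.
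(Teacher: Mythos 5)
Your proof is correct, but it takes a genuinely different route from the paper's. The paper proves the same ``refinement'' statement ($\zh_i=\zh_j\Rightarrow z_i^*=z_j^*$) by induction on the length of paths in $G_\rho(X)$: given a path whose endpoints share an estimated label but have different true labels, it extracts a sub-sequence of at most $M$ edges $(u_q,v_q)$ with $\zh_{v_{q-1}}=\zh_{u_q}$, telescopes the level differences along these edges, and pays $2\omega(\rho)<\gamma/M$ on each, reaching the contradiction $\gamma< Q\gamma/M$. You instead run an extremal argument: pick a bad pair minimizing the true-level gap, sort the distinct values of the residual $h=\fh-f^*$, bound consecutive gaps by $2\omega(\rhomin)$ via a graph cut between sublevel and superlevel sets, and count that at most $M+1$ distinct residual values can fall in $[b,a]$ (the minimality of the chosen pair is exactly what caps the count at one interior value per estimated label). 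Both arguments isolate the same core fact --- one only pays $2\omega(\rhomin)$ at most $M$ times --- and recover the same constant $\tfrac1{2M}$; both then pass from refinement to equality of partitions using that $\bm z^*$ attains all $M$ labels (an assumption the paper also makes explicit only in its appendix corollary). The main trade-off: your cut-and-sort argument leans on the total order of $\reals$ (sorting the values of $h$, the interval $[b,a]$, the sublevel/superlevel cut), so unlike the paper's path induction it does not immediately extend to the vector-valued setting of Appendix~\ref{app:ident:proofs}, where $\mu_k^*\in\mathcal{Y}$ for a general normed space; in exchange, it avoids the induction and the somewhat delicate edge-subsequence construction, and makes the origin of the factor $2M$ more transparent. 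One point worth making explicit in a write-up: the cut argument needs $G_{\rhomin}(X)$ itself to be connected, which holds here because the point cloud is finite (the infimum defining $\rhomin$ is attained), and it needs $\omega$ to be non-decreasing so that $\omega(d(x_i,x_j))\le\omega(\rhomin)$ on edges --- the paper's own proof uses the same monotonicity convention.
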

Our next result is an error bound on the recovered levels $\bm \muh$: 
\begin{prop}[Level recovery]\label{prop:dev_bnd}
    Under the assumptions of Theorem~\ref{thm:misclass:v1}, let $(\fh, \bm \muh, \bm \zh)$ be the solution of the zero-mean version of problem~\eqref{opt:recovery}. Then, we have
    \begin{align}\label{eq:level:dev}
        \max_{k\in [M]}| \mu_k^* - \muh_k|
        \leq 2(M-1)\, \omega(\lbd) + \Bigl|\frac{1}{n}\sum_{i=1}^n f^*(x_i)\Bigr|.
    \end{align}
\end{prop}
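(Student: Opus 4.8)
The plan is to leverage that, in the \emph{noiseless} model~\eqref{eq:noiseless}, the objective in~\eqref{opt:recovery} attains value exactly $0$, which forces the fitted residuals to vanish at every sample point. Writing $\bar f^* := \frac1n\sum_{i=1}^n f^*(x_i)$, the triple $(f^* - \bar f^*,\ \bm\mu^* + \bar f^*\mathbf 1,\ \bm z^*)$ is feasible for the zero-mean problem — a constant shift keeps $f^*$ in $\Fc_\omega(\Xc)$ and renders it empirically zero-mean — and gives objective $0$. Since the objective is a sum of squares, every minimizer, in particular $(\fh,\bm\muh,\bm\zh)$, also achieves $0$, i.e.\ $y_i = \fh(x_i) + \muh_{\zh_i}$ for all $i$. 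By Theorem~\ref{thm:misclass:v1}, which transfers to the zero-mean version through the objective- and label-preserving bijection $(f,\bm\mu)\mapsto\bigl(f-\tfrac1n\sum_j f(x_j),\ \bm\mu+\tfrac1n\sum_j f(x_j)\mathbf 1\bigr)$, we may assume after the label alignment that $\bm\zh=\bm z^*$. Subtracting the model equation then yields $f^*(x_i)-\fh(x_i) = \muh_{z_i^*}-\mu^*_{z_i^*}$ for every $i$; set $a_k := \muh_k - \mu_k^*$.

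Next I would note that $g := f^*-\fh$ is a difference of two functions in $\Fc_\omega(\Xc)$, hence has modulus of continuity $2\omega$, while the identity above says $g(x_i) = a_{z_i^*}$, so $g$ is \emph{constant on each cluster} $\cluster_k$. Consequently, whenever $\cluster_k$ and $\cluster_\ell$ are joined by an edge of $G_{\lbd}(\cluster)$, picking $i\in\cluster_k$, $j\in\cluster_\ell$ realizing $d(\cluster_k,\cluster_\ell)\le\lbd$ gives $|a_k-a_\ell| = |g(x_i)-g(x_j)| \le 2\omega(d(x_i,x_j)) \le 2\omega(\lbd)$, using that $\omega$ is nondecreasing. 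Since $G_{\lbd}(\cluster)$ is connected (the defining infimum is attained because the cluster distances take finitely many values) and has $M$ vertices, any two clusters are linked by a path of at most $M-1$ edges, so the triangle inequality gives $|a_k-a_\ell|\le 2(M-1)\,\omega(\lbd)$ for all $k,\ell\in[M]$.

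Finally I would anchor the overall level of the $a_k$'s using the zero-mean constraint: summing $g(x_i)=a_{z_i^*}$ over $i$ and using $\sum_i\fh(x_i)=0$ yields $\sum_{k}|\cluster_k|\,a_k = \sum_i f^*(x_i)$, i.e.\ $\bar f^*$ is a convex combination $\sum_k w_k a_k$ of the $a_k$. Hence, for each fixed $k$, $|a_k - \bar f^*| = \bigl|\sum_\ell w_\ell(a_k-a_\ell)\bigr|\le \max_\ell|a_k-a_\ell|\le 2(M-1)\omega(\lbd)$, so $|\mu_k^*-\muh_k| = |a_k|\le 2(M-1)\omega(\lbd) + |\bar f^*|$, which is~\eqref{eq:level:dev}.

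I expect the only genuine subtlety to be the first step: justifying that the global optimum equals $0$ in the noiseless regime and importing the no-misclassification conclusion of Theorem~\ref{thm:misclass:v1} (stated for~\eqref{opt:recovery}) to the zero-mean variant with the correct relabeling. Once the residuals are known to vanish, the remainder is a short propagation-along-a-spanning-tree estimate together with the averaging identity coming from the constraint. A minor point to handle carefully is taking $\omega$ nondecreasing (standard for a modulus of continuity), so that $d(x_i,x_j)\le\lbd$ implies $\omega(d(x_i,x_j))\le\omega(\lbd)$, and noting that the bound is vacuous unless all clusters $\cluster_k$ are nonempty (otherwise $\lbd=\infty$), so the $a_k$ are well defined in the relevant case.
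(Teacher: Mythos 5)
Your proposal is correct and follows essentially the same route as the paper's proof: apply Theorem~\ref{thm:misclass:v1} to align $\bm\zh$ with $\bm z^*$, observe that the residual function is constant on each cluster, propagate the bound $2\omega(\lbd)$ along a path of at most $M-1$ edges in the connected cluster graph $G_{\lbd}(\mathcal C)$, and then use the zero-mean constraint to write $\frac1n\sum_i f^*(x_i)$ as a convex combination of the level errors. Your opening justification that the noiseless optimum is zero (and that Theorem~\ref{thm:misclass:v1} transfers to the zero-mean variant via the constant-shift bijection) is a point the paper leaves implicit, but the substance of the argument is identical.
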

In essence, both Theorem~\ref{thm:misclass:v1} and Proposition \ref{prop:dev_bnd} provide deviation bounds under specific connectivity constraints. The quantities $\rhomin$ and $\lbd$ gauge the minimum jump distances at which the induced graphs of $\{x_i\}_{i=1}^n$ and $\{\mathcal{C}_k\}_{k=1}^M$ remain connected. The modulus $\omega(\cdot)$ then translates these jumps in distances into equivalent jumps in levels, observed indirectly through $\{y_i\}$.


Theorem~\ref{thm:misclass:v1} says that perfect cluster recovery $\bm \zh \equiv \bm z^*$ is attainable if this translated jump is roughly below the minimum resolution of the true levels $\{\mu^*_k\}$. 
Proposition~\ref{prop:dev_bnd} has a similar theme, but in the context of level recovery, where the possible values of 
$\mu_k^*\in\reals$ are not discretized. 
This leads to a gradual reduction in error as outlined in Proposition~\ref{prop:dev_bnd}, contrasting 
with the sharp recovery of discrete labels $z_i^* \in [M]$
in Theorem~\ref{thm:misclass:v1}.  


The remainder term $|\frac1n\sum_i f^*(x_i)|$ in~\eqref{eq:level:dev}
highlights the scalar-shift ambiguity inherent in the components of  model~\eqref{eq:noiseless}, 
where for any scalar 
$c\in\reals$ we can rewrite~\eqref{eq:noiseless} 
as
\[
y_i = (f^*(x_i) - c) + (\mu^*_{z_i^*} + c).
\]
In other words, the two components are only identifiable up to a scalar shift.
More generally, problem~\eqref{opt:recovery} can be extended to
include a constraint $\frac1n \sum_{i=1}^n f(x_i) = \widebar{f^*}$, 
for some prescpecified average value $\widebar{f^*}$, in which case
Proposition~\ref{prop:dev_bnd} holds with the remainder term  $|\frac1n\sum_i f^*(x_i) - \widebar{f^*}|$.

As an immediate corollary to Proposition~\ref{prop:dev_bnd}, one can show that, under mild regularity on the sampling of $(X,\bm z^*)$, the zero-mean recovery problem~\eqref{opt:recovery} achieves asymptotic identifiability of $(\bm\mu^*,\bm z^*)$. As this corollary references multiple sets of samples, the notation $(\cdot)^{(n)}$ will be used to differentiate parameters belonging to different sets of observations $\{y_i\}_i$. 
We also allow the number of observed levels $M_n$ 
to grow with $n$. We say that a condition is \emph{eventually satisfied} if it holds for all $n \ge N$ for some $N \in \nats$.

\begin{cor}\label{corr:lim_id}
Consider a sequence of point clouds $\{X^{(n)}\}$, with corresponding true labels $\{\bm z^{*(n)}\}$ and class levels $\bm \mu^{*(n)} \in \reals^{M_n}$. Let $\lbd^{(n)}$ be the label distance for $(X^{(n)}, \bm z^{*(n)})$. Assume that the connectivity condition~\eqref{eq:cluster:recovery:condition} 
is eventually satisfied, and as $n \to \infty$,
\begin{align*}
    \omega(\lbd^{(n)}) = o(M_n^{-1}), 
    \quad 
    \frac{1}{n}\sum_{x\,\in\, X^{(n)}} f^*(x) = o(1).
\end{align*}
Then for any solution $(\fh^{(n)},\bm \muh^{(n)},\bm \zh^{(n)})$ of the zero-mean version of problem~\eqref{opt:recovery}, 
\begin{align*}
    \lim_{n\rightarrow \infty} \max_{k\in M_n} \bigl|\mu_k^{*(n)} - \muh_k^{(n)}\bigr| = 0.
\end{align*}
\end{cor}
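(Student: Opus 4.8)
The plan is to apply Proposition~\ref{prop:dev_bnd} term by term along the sequence and then send the two error contributions to zero separately. First I would fix $N\in\nats$ such that the connectivity condition~\eqref{eq:cluster:recovery:condition} holds for all $n\ge N$ — possible since that condition is assumed eventually satisfied — so that for each such $n$ the hypotheses of Theorem~\ref{thm:misclass:v1}, and hence of Proposition~\ref{prop:dev_bnd}, are met for $(X^{(n)},\bm z^{*(n)},\bm\mu^{*(n)})$. Applying the level bound~\eqref{eq:level:dev} to an arbitrary minimizer $(\fh^{(n)},\bm\muh^{(n)},\bm\zh^{(n)})$ of the zero-mean version of~\eqref{opt:recovery} then gives, for all $n\ge N$,
\begin{align*}
    \max_{k\in [M_n]} \bigl|\mu_k^{*(n)} - \muh_k^{(n)}\bigr|
    \;\le\; 2(M_n-1)\,\omega(\lbd^{(n)}) + \Bigl|\frac1n\sum_{x\in X^{(n)}} f^*(x)\Bigr|.
\end{align*}

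Next I would bound the right-hand side. Since $M_n-1\le M_n$, the first term is at most $2M_n\,\omega(\lbd^{(n)})$, and the assumption $\omega(\lbd^{(n)})=o(M_n^{-1})$ forces $M_n\,\omega(\lbd^{(n)})\to 0$; the second term is $o(1)$ directly by hypothesis. Hence the right-hand side is $o(1)$, and since the left-hand side is nonnegative a squeeze argument yields $\max_{k\in[M_n]}|\mu_k^{*(n)}-\muh_k^{(n)}|\to 0$, which is the claim.

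I do not expect a genuine obstacle here: the corollary is essentially a limiting reading of Proposition~\ref{prop:dev_bnd}, so the only points worth stating with care are bookkeeping ones. Namely: (i) the displayed bound holds for \emph{every} minimizer, so the conclusion is independent of any tie-breaking rule among solutions of the zero-mean problem; (ii) one implicitly uses that the $\argmin$ in that problem is attained for each $n\ge N$, which holds under the standing assumptions; and (iii) if one instead works with the shifted constraint $\frac1n\sum_i f(x_i)=\widebar{f^*}$ for a prescribed sequence of averages, the same argument goes through verbatim provided $\frac1n\sum_{x\in X^{(n)}}f^*(x)-\widebar{f^*}=o(1)$, using the variant of Proposition~\ref{prop:dev_bnd} noted in the discussion following its statement.
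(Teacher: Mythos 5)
Your proposal is correct and matches the paper's intended argument: the corollary is derived exactly by applying the bound of Proposition~\ref{prop:dev_bnd} for each $n$ past the point where the connectivity condition holds, then letting the two terms vanish via the assumptions $\omega(\lbd^{(n)}) = o(M_n^{-1})$ and $\frac1n\sum_{x\in X^{(n)}} f^*(x) = o(1)$. The bookkeeping remarks (uniformity over minimizers, the shifted-mean variant) are consistent with the paper's discussion and add nothing that conflicts with it.
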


According to Corollary~\ref{corr:lim_id},
when $\{M_n\}$ is bounded, a set of sufficient conditions for recovery of both clusters and levels is:
\[\rhomin^{(n)} = o(1), \quad  
\lbd^{(n)} = o(1), \quad \text{and}\quad 
\Delta_n := \min_{k\neq \ell} |\mu_k^{*(n)} - \mu_\ell^{*(n)}| = \Omega(1),\]
i.e., minimum level gap is bounded below.
When $\{M_n\}$ is unbounded, both the connectivity $\rhomin$ and the label distance $\lbd$ must decrease more rapidly. For example, when the smooth component is Lipschitz (i.e., $\omega(t) = Lt$), a set of sufficient conditions are
\[
\rhomin^{(n)} = o(\Delta_n / M_n), \quad \lbd^{(n)} = o(1/M_n).
\]
Note that Corollary~\eqref{eq:cluster:recovery:condition} is a deterministic result, but it can be translated to a high probability version given appropriate assumptions on the sampling distribution of $(X,\bm z)$.

\medskip
The identifiability results of this section are intuitive and  described in terms of easily understood topological quantities. However,
it is noteworthy 
that how to
obtain a perfect classification result similar to Theorem~\ref{thm:misclass:v1} is not immediately clear.
That is, irrespective of the placements of labels $\bm z^*$ on the point cloud $X$, and regardless of the dimension of the space carrying $X$,  
we have shown that one can globally control $\bm\zh$ using only 
a scalar 
parameter of the point cloud, namely, the radius of connectivity of its associated neighbor graphs $G_\rho(X)$.

\section{Methods and Optimization}\label{sec:methods}
\newcommand\Kc{\mathcal K}

For practical estimation, we consider estimating functions $f^* \in \Hb$ lying in the Hilbert-norm $R$-ball of an RKHS. The following example shows that this case can be treated as a special case of~\eqref{eq:Fc:omega:def} with a linear modulus $\omega(t) = O(t)$.

\begin{exa}\label{ex:kernel_dist}
    Consider the case where $f^*$ lies in RKHS $\Hb$. The natural metric to consider on $\Xc$ is the so-called \emph{kernel metric}
    \begin{align}
        d_{\mathcal{K}}(x,x') &:= \norm{\Kc(x,\cdot) - \Kc(x',\cdot)}_{\Hb}  
        = \sqrt{\mathcal{K}(x,x) - 2\mathcal{K}(x,x') + \mathcal{K}(x',x')}.
    \end{align}
    Using the Cauchy--Schwarz inequality, it is straightforward to show the following Lipschitz property: For any $f \in \Hb$, we have
    \[
    |f(x) - f(x')| \le \norm{f}_{\Hb} \, d_{\Kc}(x,x')
    \]
     for all $x,x'\in \Xc$. Letting $\omega_f$ denote a modulus of continuity of function $f$, the above shows that one can take $\omega_f(t) = \norm{f}_\Hb \cdot t$ for all $f \in \Hb$. If we further assume $\norm{f^*}_\Hb \le R$ for some constant $R$, then $\omega(t) = O(t)$. 
\end{exa}

\paragraph[AltMin Algorithm]{\altmin Algorithm} For our estimation procedure, we propose a blockwise coordinate descent with alternating updates on $(\bm \mu, \bm z)$ and $f$. More specfically, in each iteration, the current estimates $(\fh, \bm \muh, \bm \zh)$ are updated to the new ones $(\fh^+,\bm \muh^+,\bm \zh^+)$ by
\begin{align}
    &\hspace{-1mm}\fh^+ = \argmin_{f \in \Hb}\frac{1}{n}\sum_{i=1}^n(y_i - \muh_{\zh_i} - f(x_i))^2 + \tau \norm{f}_{\Hb}^2,   \label{opt:rep_opt} \\
    &\hspace{-1mm}(\bm \muh^+, \bm \zh^+) = 
    \argmin_{\bm \mu \in \mathbb{R}^M,\, \bm z\in [M]^n} \frac{1}{n}\sum_{i=1}^n(y_i - \mu_{z_i} - \fh^+(x_i))^2,\label{opt:km_opt}
\end{align}
with $\tau$ and $M$ being values to be determined through a cross-validation procedure.

For fixed $\fh$, optimization~\eqref{opt:km_opt} can be solved through a $k$-means procedure. For RKHS $\Hb$ equipped with kernel $\mathcal{K}:\mathcal{X}\times\mathcal{X}\rightarrow \mathbb{R}$, optimization (\ref{opt:rep_opt}) has the following representer solution
\begin{equation}\label{eq:fhat}
    \fh^+ = \frac{1}{\sqrt{n}}\sum_{i=1}^n \widehat\alpha^+_i \Kc(x_i, \cdot), \;\; \bm{\widehat\alpha^+} := (K + \tau I_{n})^{-1}(\bm{y}-\widehat{Z}\bm \muh)/\sqrt{n}
\end{equation}
where $K$ is the $n \times n$ kernel matrix with entries $K_{ij} = \mathcal{K}(x_i,x_j)/n$ and $\widehat{Z} \in\{0,1\}^{n\times L}$ is the one-hot encoding label matrix for previous label estimate $\bm\zh$. \nocite{Wainwright_HDS}

\newcommand\gh{\widehat g}
\subsection{One-step Analysis}\label{sec:one:step:analysis}

In general, the interaction between updates~\eqref{opt:rep_opt} and~\eqref{opt:km_opt} may be quite complicated. In this section we  show a positive result: In the large sample limit, classification with \altmin simplifies to classification with regular $k$-means on the uncontaminated (step) signal.

\newcommand\ff{\breve{f}}
\newcommand\gf{\breve{g}}

We consider observations $\{y_i\}_i$ drawn from~\eqref{eq:decomp}~
with i.i.d. zero-mean noise $\varepsilon_i$ of variance $\sigma^2$. 
As before, $g^*$ will be assumed to be a step signal with $g^*(x_i) = \mu_{z_i^*}^*$, although the results of this section hold for any $g^*$ that is \emph{sufficiently outside} the RKHS, as will be made precise in Corollary~\ref{corr:asym_km_err}. For our analysis, we consider a half-step of the \altmin algorithm, evaluating performance after update~\eqref{eq:fhat}. Our goal is to show the pointwise consistency of the KRR estimator $\bm\fh \coloneqq (\fh(x_i))$, that is
\begin{equation}\label{eq:point_eval}
\lim_{n\to\infty}\mathbb{E}_{\bm \varepsilon}\, {\rm MSE}(\bm f^*, \bm \fh) = 0,
\end{equation}
where ${\rm MSE}(\bm a, \bm b) = \norm{\bm a - \bm b}_2^2/n.$

Let  $K = V \Lambda V^\T$ be the eigenvalue decomposition of the kernel matrix where $\Lambda = \diag(\lambda_i, i \in[n]$), and define 
\[h(\lambda;\tau) := \frac{\lambda^2}{(\lambda + \tau)^2}, 
\quad \Gamma_\tau := \sqrt{h(\Lambda; \tau)} = \Lambda (\Lambda + \tau I)^{-1}
\]
extending a scalar function to diagonal matrices in the natural way (i.e., by applying to each diagonal entry.) 
We assume the eigenvalues are ordered as follows: $\lambda_1 \ge \lambda_2 \ge \cdots \ge \lambda_n$.  
Consider the Fourier expansion of $f^*$ and $g^*$ in the  (empirical) eigen-basis of the kernel, that is, $\bm \ff \coloneqq (\ff_i) \coloneqq V^\T \bm f^*$ and $\bm \gf \coloneqq (\gf_i) \coloneqq V^\T \bm g^*$. Then
\begin{align}
    \frac{1}{n}\mathbb{E}_{\bm \varepsilon}\norm{\bm f^* - \bm \fh}_2^2 &= 
    \frac{1}{n}\mathbb{E}_{\bm \varepsilon}\norm{(I_n- \Gamma_\tau) V^\T \bm f^* -\Gamma_\tau V^\T\bm g^* - V^\T \bm\varepsilon }_2^2 \notag \\
    &\leq \frac{2}{n}\norm{ (I_n- \Gamma_\tau) \bm \ff}_2^2 +   \frac{2}{n}\norm{\,\Gamma_\tau \bm \gf\,}_2^2 + 
        \frac{\sigma^2}{n}{\rm tr}(\Gamma_\tau^2) \notag \\
    &= \frac{2}{n}\sum_{i=1}^n \frac{\tau^2  \ff_i^2}{(\lambda_i + \tau)^2 } 
 + \frac{2}{n}\sum_{i=1}^n 
    h(\lambda_i; \tau) \,\gf_i^2
 + \frac{\sigma^2}{n} \sum_{i=1}^n \frac{\lambda_i^2}{(\lambda_i + \tau)^2} \label{eq:decomp:1}
\end{align}
The first and the third terms are the bias and variance, respectively, for recovering $\bm f^*$ in classical kernel ridge regression (KRR). Both can be made to go to zero as $n \to \infty$ for a proper choice of $\tau = \tau_n = o(1)$. The middle term is new to our decomposition, and is the filtering effect of KRR on the step component $g^*(\cdot)$. 

Intuitively, since a discontinuous $g^*$ is not in the RKHS, one would have 
\[
\min_{\substack{\bar{g}\in\Hb:\\ (\bar g(x_i)) = \bm g^*}} \norm{\bar g}_\Hb \to \infty\quad\text{as }n\to\infty.
\]
This in turn implies $\frac1n \sum_{i=1}^n \gf_i^2 / \lambda_i \to \infty$, 
 forcing the middle term in~\eqref{eq:decomp:1} to become negligible, implying that KKR effectively filters out $g^*$. 
To see this, note that since $\lambda_i$ are decaying as a function of $i$, for the expression $\frac1n \sum_{i=1}^n \gf_i^2 / \lambda_i$ to grow without bound, most of the energy of $g^*$ (where ``energy'' is defined as $\frac1n \sum_{i=1}^n  \gf_i^2$) must  be concentrated on the higher-index components, which correspond to smaller eigenvalues.
Multiplication by  
$h(\lambda_i, \tau)$   filters out components of $g^*$ associated with small eigenvalues; equivalently it acts a a low-pass filter, filtering out higher index (i.e., higher frequency) components. 

To make the above intuition more precise, 
consider the  \emph{spectral survival function} of $g^*$:
\begin{align}\label{eq:spec:surv}
     S_{g^*}(t):=\sum_{i=1}^{n} \frac{\gf_i^2}{n}\cdot 1\{\lambda_i > t\}.
\end{align}
As $t \to \infty$, $S(t)$ goes to zero, and the faster this decay, the more $g^*$ is concentrated on higher-index components. That is, the tail behavior of $S_{g^*}(t)$ is what determines how well $g^*$ is filtered by KRR. Let $r_n = \max \{i\in[n]:\, \gf_i^2 > 0\}$ and let $\beta_n$ be the largest $\beta \ge 0$ that satisfies
\begin{equation}\label{eq:tail_bnd}
    S_{g^*}(t)
    \;\leq\; \norm{\bm g^*}_\infty^2\cdot \Bigl(\frac{\lambda_{r_n}}{t} \Bigr)^{\beta}, \quad \text{ for all $t > 0$.}
\end{equation}
Such a tail bound always exists, since the trivial case $\beta = 0$ reduces to $\norm{\bm g^* /\sqrt{n}}_2^2 \leq \norm{\bm{g}^*}_\infty^2$.  
The parameters of the tail bound are influenced by how much the higher-index components of $\bm \gf$ contribute to the total norm (or energy).
The tail bound works together with the spectral filter $h(\lambda;\tau)$ to give the following control for the middle term of~\eqref{eq:decomp:1}:
\begin{prop}\label{lem:beta_lem}
    Consider KRR with regularization parameter $\tau_n$
    and 
    let $\xi_n := \lambda_{r_n} / \tau_n$. 
    Then, 
    \begin{equation}\label{eq:gamma_decay}
        \frac{1}{n}\sum_{i=1}^n 
        h(\lambda_i; \tau_n) \,\gf_i^2
        \;\lesssim\; 
        \max\{ \xi_n^2,\, \xi_n^{\beta_n}\},
    \end{equation}
    where $\lesssim$ denotes inequality up to universal constants.
\end{prop}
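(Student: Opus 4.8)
The plan is to split the sum in the left-hand side of~\eqref{eq:gamma_decay} at the threshold index $r_n$, and to bound each piece using the two regimes of the spectral filter $h(\lambda;\tau_n) = \lambda^2/(\lambda+\tau_n)^2$. Since $\gf_i = 0$ for $i > r_n$ by definition of $r_n$, the sum only runs over $i \le r_n$, and for those indices $\lambda_i \ge \lambda_{r_n}$. The key observation is that $h(\lambda;\tau_n)$ is increasing in $\lambda$, so on the range $\lambda_i \in [\lambda_{r_n}, \lambda_1]$ we have the crude bound $h(\lambda_i;\tau_n) \le 1$ always, but also $h(\lambda_i;\tau_n) \le (\lambda_i/\tau_n)^2$ when $\lambda_i \le \tau_n$. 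This suggests handling the ``small eigenvalue'' block (where $\lambda_i$ is comparable to $\lambda_{r_n}$, hence at most $\xi_n \tau_n$) separately from the ``large eigenvalue'' block.

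First I would write $\frac1n\sum_{i=1}^{r_n} h(\lambda_i;\tau_n)\gf_i^2$ as a Stieltjes-type integral against the spectral survival function $S_{g^*}$: informally, $\frac1n \gf_i^2$ is the jump of $-S_{g^*}$ at $t = \lambda_i$, so the sum equals $\int_0^\infty h(t;\tau_n)\, (-dS_{g^*}(t))$ over the support $t \in [\lambda_{r_n}, \lambda_1]$. Integrating by parts (or simply using Abel summation on the ordered eigenvalues) converts this into an integral of $S_{g^*}(t)$ against $dh(t;\tau_n)$, at which point the tail bound~\eqref{eq:tail_bnd}, $S_{g^*}(t) \le \norm{\bm g^*}_\infty^2 (\lambda_{r_n}/t)^{\beta_n}$, can be substituted. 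The remaining one-dimensional integral $\int_{\lambda_{r_n}}^{\infty} (\lambda_{r_n}/t)^{\beta_n}\, d h(t;\tau_n)$ is then estimated by splitting at $t = \tau_n$: on $t \le \tau_n$ one uses $h(t;\tau_n) \le (t/\tau_n)^2$ and $h'$ behaves like $t/\tau_n^2$, producing a contribution of order $\xi_n^2$ (up to a $\beta_n$-dependent constant, which one absorbs — or one argues the $\beta_n \le 2$ and $\beta_n > 2$ cases give $\xi_n^{\beta_n}$ and $\xi_n^2$ respectively); on $t \ge \tau_n$, $h$ is bounded by $1$ and already close to its limit, so the factor $(\lambda_{r_n}/t)^{\beta_n} \le (\lambda_{r_n}/\tau_n)^{\beta_n} = \xi_n^{\beta_n}$ controls that range. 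Collecting the two contributions yields $\max\{\xi_n^2, \xi_n^{\beta_n}\}$ up to universal constants, which is exactly~\eqref{eq:gamma_decay}. The factor $\norm{\bm g^*}_\infty^2$ from the tail bound is a constant absorbed into $\lesssim$ (or, more carefully, is assumed $O(1)$; I would check the paper's conventions here).

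The main obstacle I anticipate is making the integration-by-parts / Abel-summation step clean, since $S_{g^*}$ is a step function and $h(\cdot;\tau_n)$ is smooth, and one must be careful that the boundary terms at $t = \lambda_{r_n}$ and $t \to \infty$ either vanish or are of the right order — in particular the term at $t=\lambda_{r_n}$ contributes $h(\lambda_{r_n};\tau_n) S_{g^*}(\lambda_{r_n}^-) \le h(\lambda_{r_n};\tau_n)\norm{\bm g^*}_\infty^2 \lesssim \xi_n^2 \norm{\bm g^*}_\infty^2$ (using $\lambda_{r_n} \le \xi_n\tau_n$), which is consistent with the claimed bound. A secondary subtlety is the dependence of the constant in the $t \le \tau_n$ integral on $\beta_n$: when $\beta_n$ is bounded away from $2$ the integral $\int_0^{\tau_n} (\lambda_{r_n}/t)^{\beta_n}\,(t/\tau_n^2)\,dt$ evaluates to a constant times $\xi_n^{\beta_n}\tau_n^{... }$ — one needs to verify the exponents collapse to $\min\{2,\beta_n\}$ in the exponent of $\xi_n$, which is where the $\max\{\xi_n^2,\xi_n^{\beta_n}\}$ form comes from once one notes $\xi_n \to 0$. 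I would treat the boundary case $\beta_n = 2$ by a logarithmic factor that is then dominated, or simply restrict attention to $\beta_n \ne 2$ and note the bound is continuous in $\beta_n$.
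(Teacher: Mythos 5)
Your proposal is correct and follows essentially the same route as the paper: both convert the weighted sum $\frac1n\sum_i h(\lambda_i;\tau_n)\gf_i^2$ into an integral of the spectral survival function against the filter (the paper via the layer-cake formula for the expectation of $\psi(\lambda)=(1+\tau/\lambda)^{-2}$ applied to a random eigenvalue drawn with probabilities proportional to $\gf_i^2$, you via Abel summation), and then substitute the tail bound~\eqref{eq:tail_bnd}. The only difference is in evaluating the resulting one-dimensional integral: you split at $t=\tau_n$ and handle $\beta_n$ below and above $2$ directly (with a logarithmic correction at $\beta_n=2$), whereas the paper inverts $\psi$, evaluates a Beta-function integral for $\beta<2$, and covers larger $\beta_n$ by taking an infimum over $\beta'<\beta_n$ --- a cosmetic rather than substantive difference, and both treatments share the same caveat that the hidden constant degenerates near $\beta_n=2$.
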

We note that the best case scenario in Proposition~\ref{lem:beta_lem} is obtained when $r_n = n$ and $\beta_n \ge 2$, leading to the quickest possible decay of $O(\xi_n^2) = O((\lambda_n / \tau_n)^2)$ for the residual norm.

\begin{figure}[t]
    \begin{subfigure}{0.49\linewidth}
        \centering
        \includegraphics[width=\linewidth]{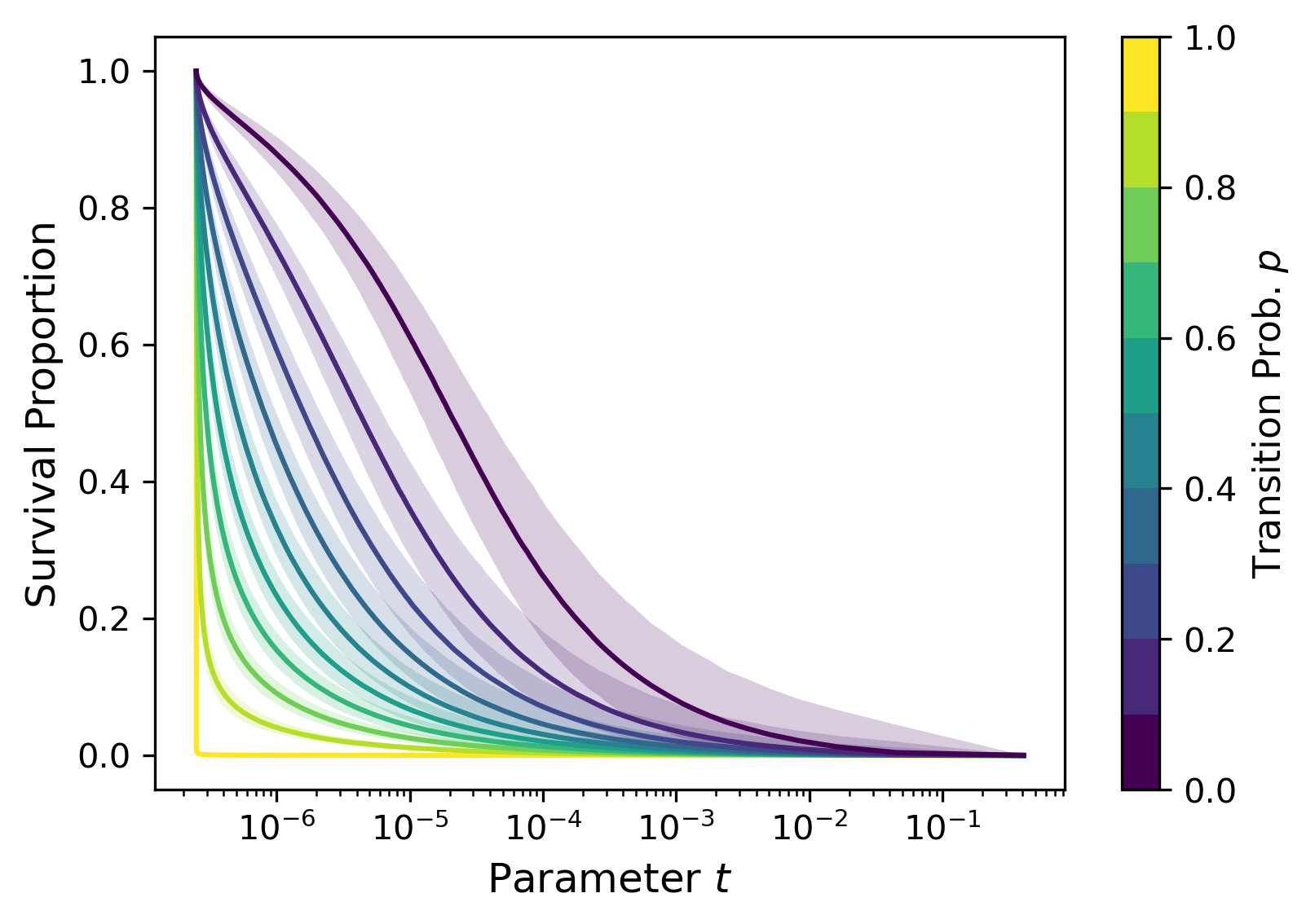}
        \caption{Survival functions of $\bm \gf$ for length $n=1000$.}
        \label{fig:markov:cdfs}
    \end{subfigure}
    \hfill
    \begin{subfigure}{0.49\linewidth}
        \centering
        \includegraphics[width=\linewidth]{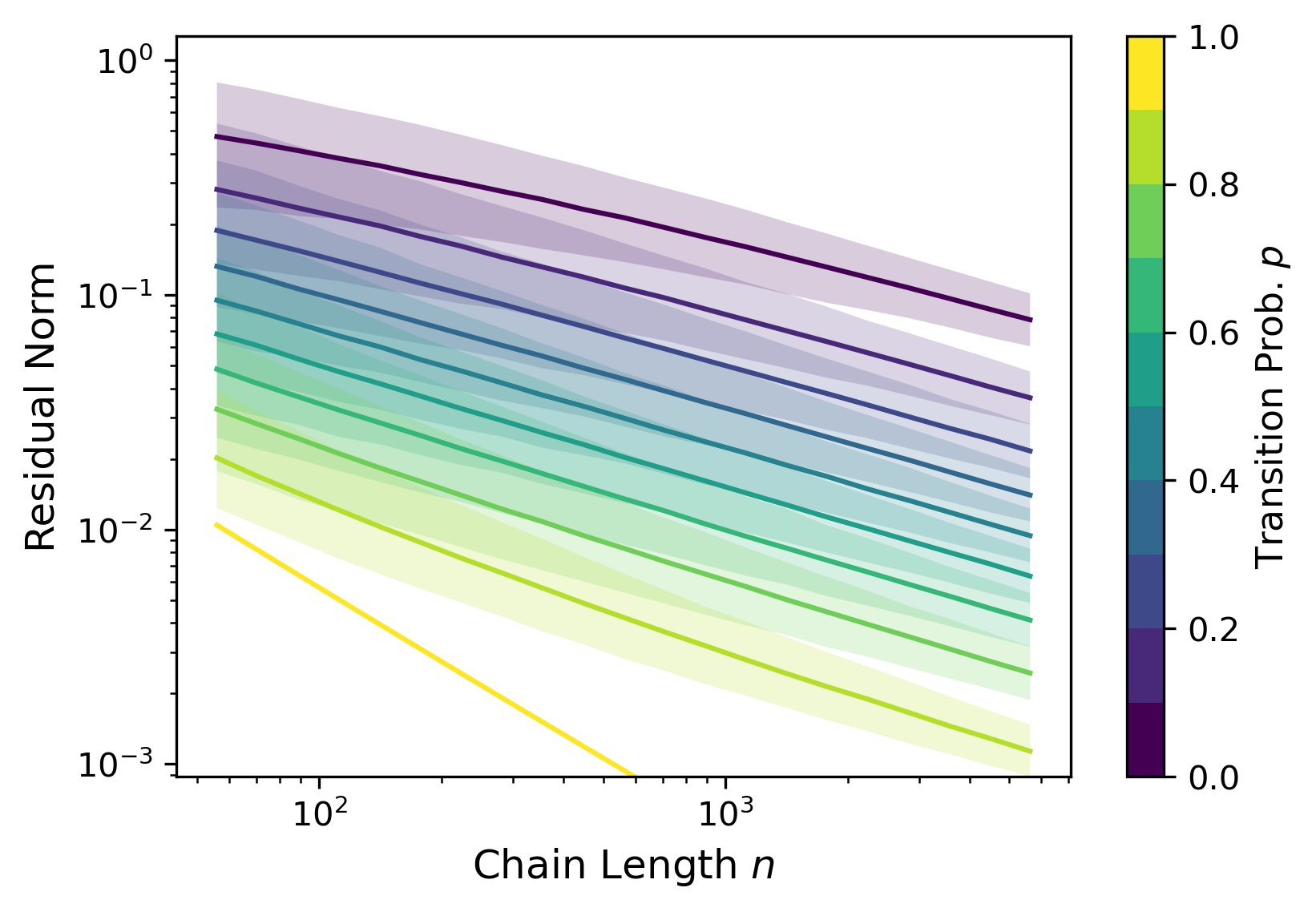}
        \caption{Residual norm $\frac{1}{n}\norm{\Gamma_\tau \bm\gf}_2^2$ 
        for $\tau_n = \sqrt{\lambda_n}$.}
        \label{fig:markov:norms}
    \end{subfigure}
     \caption{Experiment results for the 2-state, $p$-probability Markov chain. 10000 chains were simulated for each $p\in\{k/10\}_{k=1}^{10}$. Shown in subfigures are median results with 95\% probability intervals shaded in the corresponding colors. In the case of $p=1$, there is no shading.}
      \label{fig:marokv}
\end{figure}

Next we consider the case where $\Xc$ is compact and $\mathcal{K}$ is continuous, that is, kernel $\Kc$ is a Mercer kernel. Then, under the assumption that $\{x_i\}$ are i.i.d. draws, the sampling operator associated with $K$ converges compactly,
almost surely, to an integral operator $T_{\Kc}:L^2(\mathcal{X})\to L^2(\mathcal{X})$~\cite[Proposition 11-13]{Luxburg08}. This in turn implies that as long as $r_n \to \infty$, we will have $\lambda_{r_n} \to 0$.
Combined with Proposition~\ref{lem:beta_lem}, this lead to the following consistency result for the one-step procedure:
\begin{cor}\label{corr:asym_km_err}
    Consider a Mercer kernel and i.i.d. sample $\{x_i\}$. Let the regularization parameter $\tau = \tau_n$ be chosen such that the first and third term in~\eqref{eq:decomp:1} go to zero and $\xi_n = o(1)$. Further suppose that $\liminf r_n / n > 0$ and $\liminf \beta_n > 0$. Then,
    \[
    \lim_{n\to\infty}\mathbb{E}_{\bm \varepsilon}\, {\rm MSE}(\bm f^*, \bm \fh) = 0. 
    \]
\end{cor}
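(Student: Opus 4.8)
The plan is a short assembly of two ingredients the excerpt already supplies: the decomposition~\eqref{eq:decomp:1} and Proposition~\ref{lem:beta_lem}. First I would fix a realization of the sample $\{x_i\}$ on which all the stated hypotheses hold and start from~\eqref{eq:decomp:1}, which bounds $\frac1n\mathbb{E}_{\bm\varepsilon}\norm{\bm f^*-\bm\fh}_2^2$, up to harmless constants, by a bias term $\frac1n\sum_i\tau_n^2\ff_i^2/(\lambda_i+\tau_n)^2$, a filtering term $\frac1n\sum_i h(\lambda_i;\tau_n)\gf_i^2$, and a variance term $\frac{\sigma^2}{n}\sum_i\lambda_i^2/(\lambda_i+\tau_n)^2$. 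By the hypothesis on the choice of $\tau_n$ the bias and variance terms (the ``first and third terms'') tend to $0$, so the entire problem reduces to showing the filtering term is $o(1)$.

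For the filtering term I would invoke Proposition~\ref{lem:beta_lem} verbatim: with $\xi_n=\lambda_{r_n}/\tau_n$ it gives $\frac1n\sum_i h(\lambda_i;\tau_n)\gf_i^2\lesssim\max\{\xi_n^2,\xi_n^{\beta_n}\}$. Since $\xi_n=o(1)$ by hypothesis, $\xi_n^2\to 0$. For the other term inside the maximum, because $\liminf_n\beta_n>0$ I would fix some $\beta_0\in(0,\liminf_n\beta_n)$; then for all large enough $n$ one has both $\beta_n\ge\beta_0$ and $\xi_n<1$, hence $\xi_n^{\beta_n}\le\xi_n^{\beta_0}\to 0$. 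Therefore $\max\{\xi_n^2,\xi_n^{\beta_n}\}\to 0$, the filtering term vanishes, and together with the first paragraph this gives $\lim_{n\to\infty}\mathbb{E}_{\bm\varepsilon}\,{\rm MSE}(\bm f^*,\bm\fh)=0$.

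The remaining hypotheses --- that $\Kc$ is Mercer, that $\{x_i\}$ are i.i.d., and that $\liminf_n r_n/n>0$ --- do not enter this chain directly; their role is to make the requirement ``$\tau_n$ can be chosen so that bias, variance, and $\xi_n$ all vanish'' non-vacuous. Indeed $\liminf_n r_n/n>0$ forces $r_n\to\infty$, and, as observed just before the statement, compact convergence of the sampling operator of $K$ to $T_{\Kc}$ then yields $\lambda_{r_n}\to 0$ almost surely, so $\tau_n$ can be sent to $0$ slowly enough to kill the bias while still dominating $\lambda_{r_n}$ (forcing $\xi_n\to 0$) yet not so fast as to inflate the variance; I would record this as a one-line remark rather than fold it into the proof. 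There is no serious obstacle: the substantive work lives in Proposition~\ref{lem:beta_lem} and in~\eqref{eq:decomp:1}, both already in hand, and the only care the assembly demands is bookkeeping --- converting the asymptotic lower bound on $\beta_n$ into an eventual constant $\beta_0$ (and using $\xi_n<1$), and being explicit that the conclusion is an almost-sure statement, conditional on the sample-dependent hypotheses, with the Mercer/i.i.d.\ assumptions furnishing the event on which $\lambda_{r_n}\to 0$.
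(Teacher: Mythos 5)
Your proposal is correct and follows essentially the same route as the paper: the MSE is bounded via the three-term decomposition~\eqref{eq:decomp:1}, the first and third terms vanish by the hypothesis on $\tau_n$, and the middle (filtering) term is killed by Proposition~\ref{lem:beta_lem} together with $\xi_n = o(1)$ and $\liminf \beta_n > 0$, with the Mercer/i.i.d./$r_n$ assumptions serving only to guarantee $\lambda_{r_n}\to 0$ so that such a $\tau_n$ exists. Your extra bookkeeping (extracting an eventual constant $\beta_0$ and noting the conclusion is almost-sure in the sample) is exactly the right way to make the paper's informal argument rigorous.
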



To provide intuition for these results, 
we provide an example analyzing the decay of the spectral survival function $S_{g^*}(t)$ in a general two-class signal.
\begin{exa}\label{ex:1}
  Consider step signal $\bm g^*\in \{-1,1\}^n$ generated from an $n$-length, 2-state Markov chain with transition probability $p$. For estimation, we consider the following RKHS:
  \begin{equation}\label{eq:H1}
  \Hb^1[0,1] = \bigl\{f:[0,1]\to\reals \mid \; \text{$f$ is abs. cts., }\; 
       \norm{\partial_xf}_{L^2} < \infty, \; f(0) = 0 \bigr\}.
\end{equation}
This RKHS has kernel $\mathcal{K}(x,x') = \min(x,x')$. In this example, we assume the data is sampled at regularly spaced intervals with $x_i = i/ n$. 

The RKHS $\Hb^1[0,1]$ organizes functions by roughness through the Hilbert-norm $\norm{f}_{\Hb^1} = \norm{\partial_xf}_{L^2}$. Hence, signals $\bm g^*$ produced by chains with high transition probabilities are expected to have a larger corresponding Hilbert-norm and, intuitively, a rapidly decaying spectral survival function. 
This intuition is corroborated in Figure~\ref{fig:marokv}, where the survival function $S_{g^*}(t)$ and residual norm $\frac{1}{n}\norm{\Gamma_\tau \bm\gf}_2^2 = \frac{1}{n}\sum_{i=1}^n 
        h(\lambda_i; \tau_n) \,\gf_i^2$
are plotted for various transition probabilities. One observes that   as the transition probability increases, the tail decay of the survival function becomes sharper (Figure~\ref{fig:markov:cdfs}) and the norm decay steeper (Figure~\ref{fig:markov:norms}).


The kernel matrix $K$ 
 in this case has minimum eigenvalue $\lambda_n \approx (4n)^{-1}$.  
 For the regularization choice $\tau_n = \sqrt{\lambda_n}$ shown in Figure~\ref{fig:markov:norms},
 the quickest rate of decay guaranteed by Proposition~\ref{lem:beta_lem}---namely, $\mathcal O((\lambda_n/\tau_n)^2)$---will be on the order of $\mathcal{O}(n^{-1})$. 
 This rate is attained in the log-log plot of Figure~\ref{fig:markov:norms} where the curve associated with chain transition probability $p=1$ shows a linear slope 
 of $-1$. 

Figure~\ref{fig:marokv} also provides 
evidence that the conditions of Corollary~\ref{corr:asym_km_err} are met for this general signal class. The survival function plots in Figure~\ref{fig:markov:cdfs} show natural tail decays for all probabilities $p$ at $n=1000$ and the stable linear decays of Figure~\ref{fig:markov:norms} show that the $\liminf$ conditions on $r_n/n$ and $\beta_n$ 
are attainable for a general signal model. 
\end{exa}

\begin{figure}[h]
    \begin{subfigure}{0.5\linewidth}
        \centering
        \includegraphics[width=0.95\linewidth]{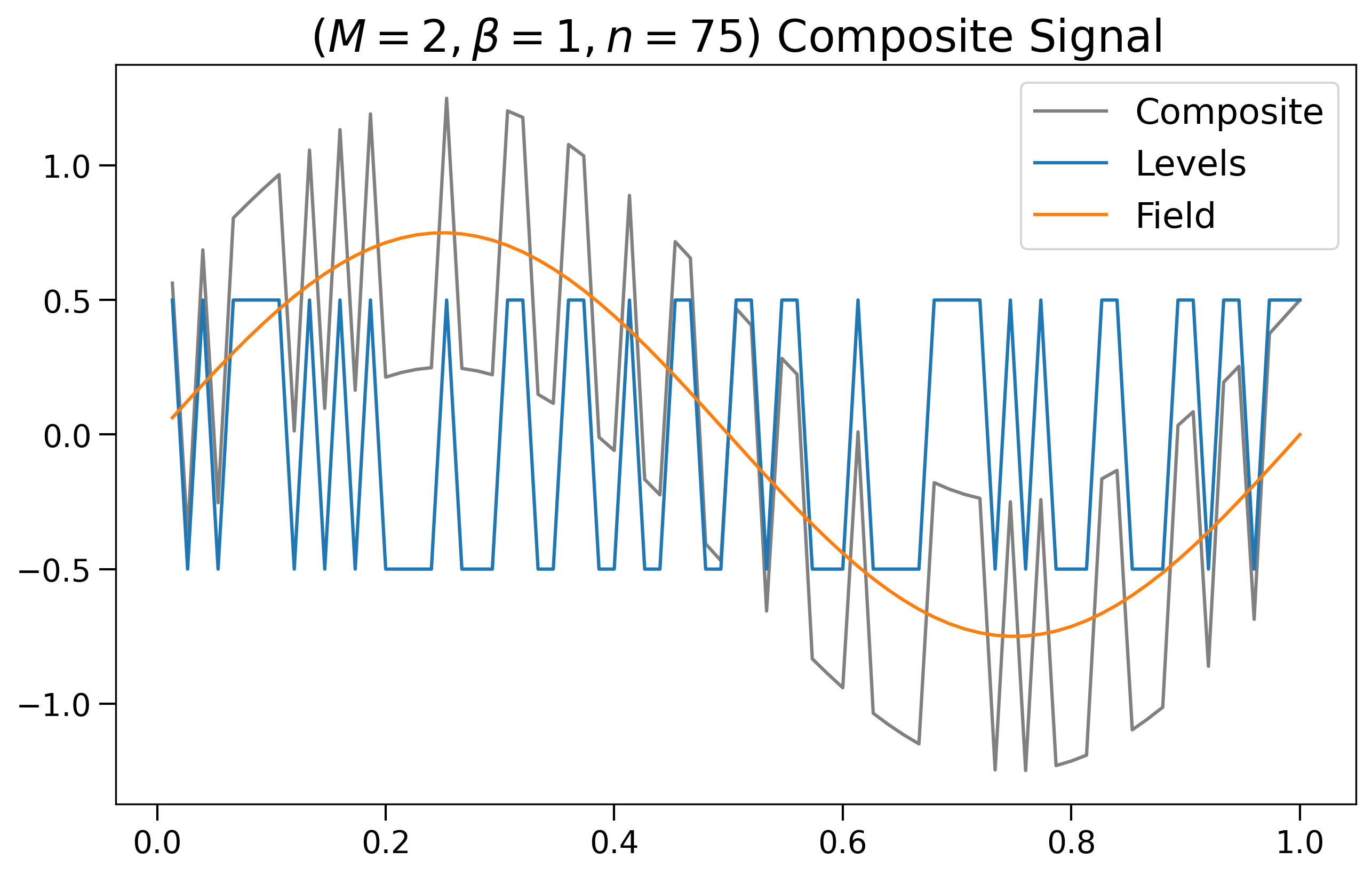}
        \caption{Smoother field}
        \label{fig:smooth}
    \end{subfigure}
    \begin{subfigure}{0.5\linewidth}
        \centering
        \includegraphics[width=0.95\linewidth]{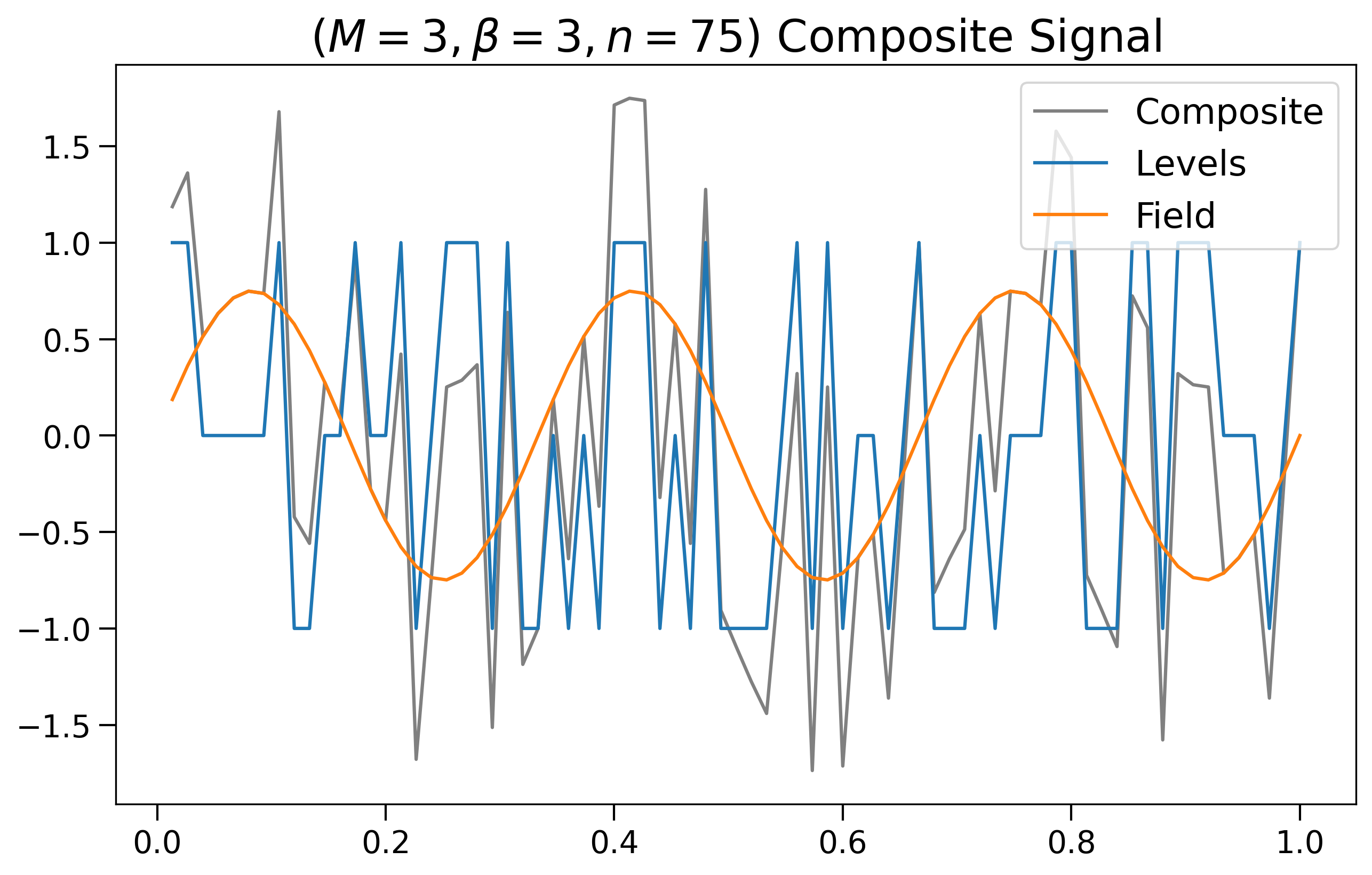}     
        \caption{Rougher field}
        \label{fig:rough}
    \end{subfigure}
    \caption{Signal, field and composite observation simulated from (\ref{eq:noiseless_rand}) for two and three classes.}
    \label{fig:simul_ex}
\end{figure}

\section{Experiments}

We now provide experimental results on the performance of the \altmin algorithm. First, we consider simulated data from 
an $M$-class data generating process on $\mathcal{X} = [0,1]$ where data $X^{(n)} = \{i/n\}_{i=1}^n$ is equispaced, cluster labels $z_i^* \sim {\rm Unif}([M])$ are uniformly distributed, and the step and smooth components follow
\begin{equation}\label{eq:noiseless_rand}
    \mu_k^* = k-\frac{M+1}{2},\qquad f_\beta^* (x) = \frac{3}{4} \,\sin(2\pi\beta x).
\end{equation}
The min kernel from Example~\ref{ex:1} was chosen for estimation due to its sinusoidal eigenfunctions. Given the equispaced data, the smallest radius $\rho$ that guarantees the connectivity condition of Theorem~\ref{thm:misclass:v1} is
\begin{align*}
\min_{i\neq j} d_{\mathcal{K}}(x_i,x_j)= \sqrt{(i+1)/n - 2i/n + i/n} =  n^{-1/2},
\end{align*}
where the kernel-metric $d_{\mathcal{K}}(x,x')$ was defined in Example~\ref{ex:kernel_dist}. The Hilbert-norm of $f^*_\beta$ can be computed using inner product $\langle f, g\rangle_{\Hb^1} = \int_0^1 \partial_x f(x)\,\partial_x g(x)\,dx$. Evaluating this norm gives the following worst-case bound on the modulus of continuity of $f^*_\beta$,
\begin{equation}\label{eq:RKHS_omega}
    \omega(\rho_{\rm min}) \le \norm{f^*_\beta}_{\Hb^1}\cdot \rho_{\rm min} \leq \frac{3\sqrt{2}}{4}\pi \beta \cdot n^{-1/2}.
\end{equation}
Finally, a noisy recovery setting will be considered where i.i.d. noise $\eps_i \sim\mathcal{N}(0,\sigma^2)$ is added to mixed observations $f_\beta^*(x_i) + \mu_{z_i^*}^*$.

In both recovery settings, sample size is grown in roughly exponential manner starting from $n=25$ to $n = 3600$. At each sample size $n$, a total of 100 datasets $(X^{(n)}, \bm y)$ were simulated. Accuracy and deviation results at each $n$ were calculated using the mean score of the 100 datasets. 

\subsection{Simulation Experiments}\label{sec:simul:experiments}

\begin{figure}[t]
    \begin{subfigure}{0.5\linewidth}
        \centering
        \includegraphics[width=0.95\linewidth]{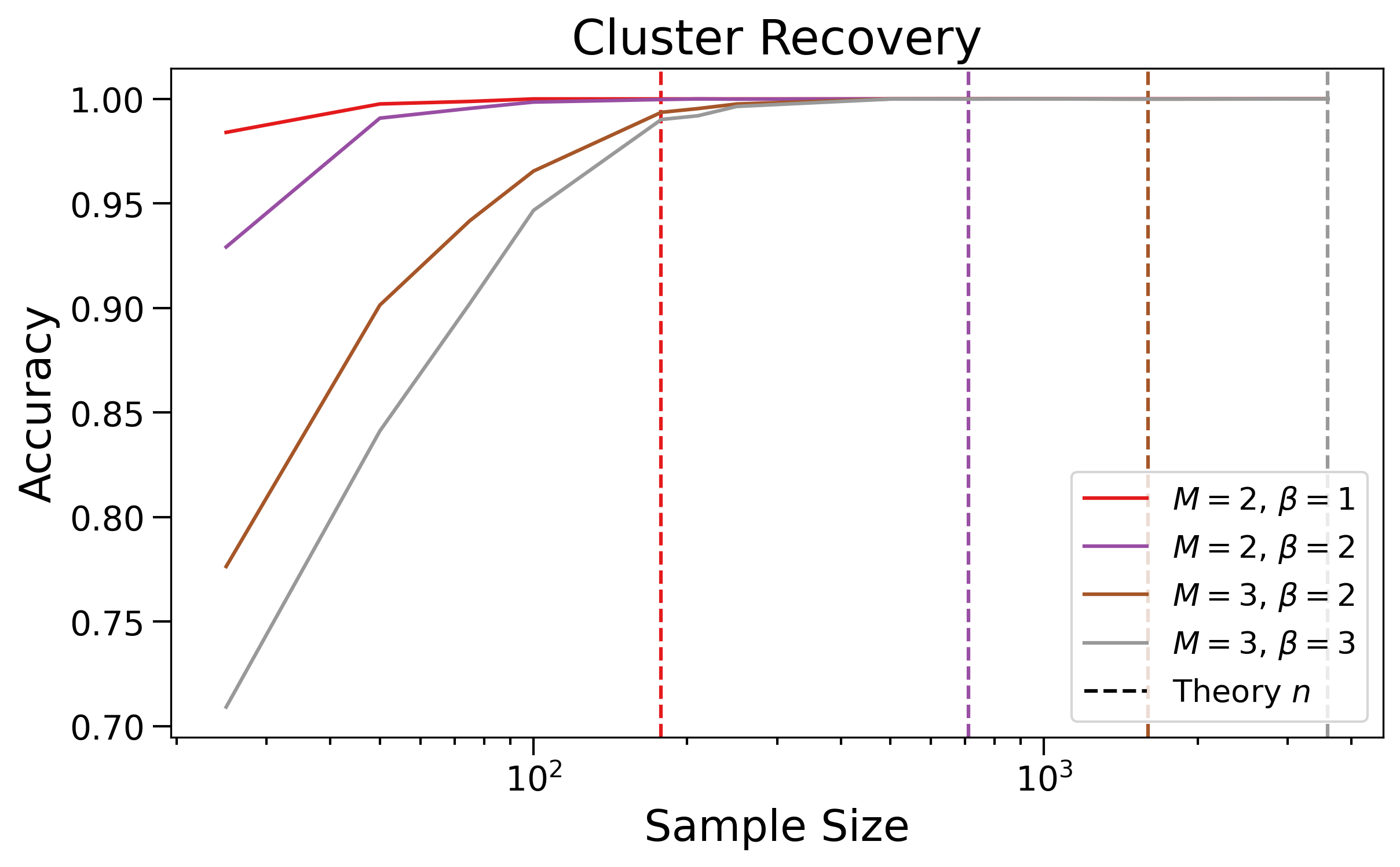}
        \caption{(Noiseless) Classification curves}
        \label{fig:noiseless:acc}
    \end{subfigure}
    \hfill
    \begin{subfigure}{0.5\linewidth}
        \centering
        \includegraphics[width=0.95\linewidth]{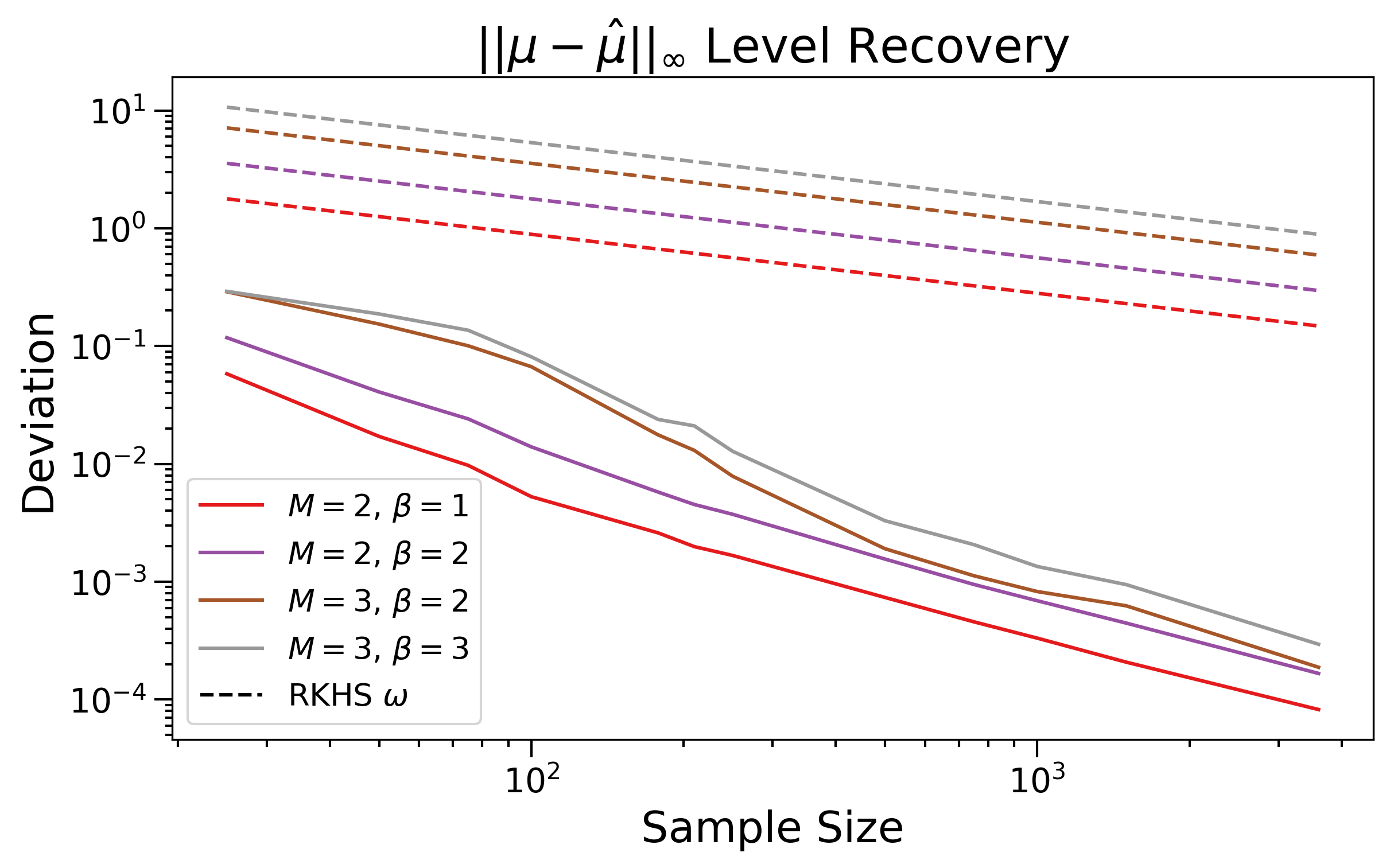}
        \caption{(Noiseless) Deviation curves}
        \label{fig:noiseless:dev}
    \end{subfigure}
     \caption{\altmin recovery results for a noiseless simulated setting. Worst-case theory bounds are shown as dashed lines for each of the different settings.}
      \label{fig:simul:noiseless}
\end{figure}

\begin{figure}
    \begin{subfigure}{0.5\linewidth}
        \centering
        \includegraphics[width=0.95\linewidth]{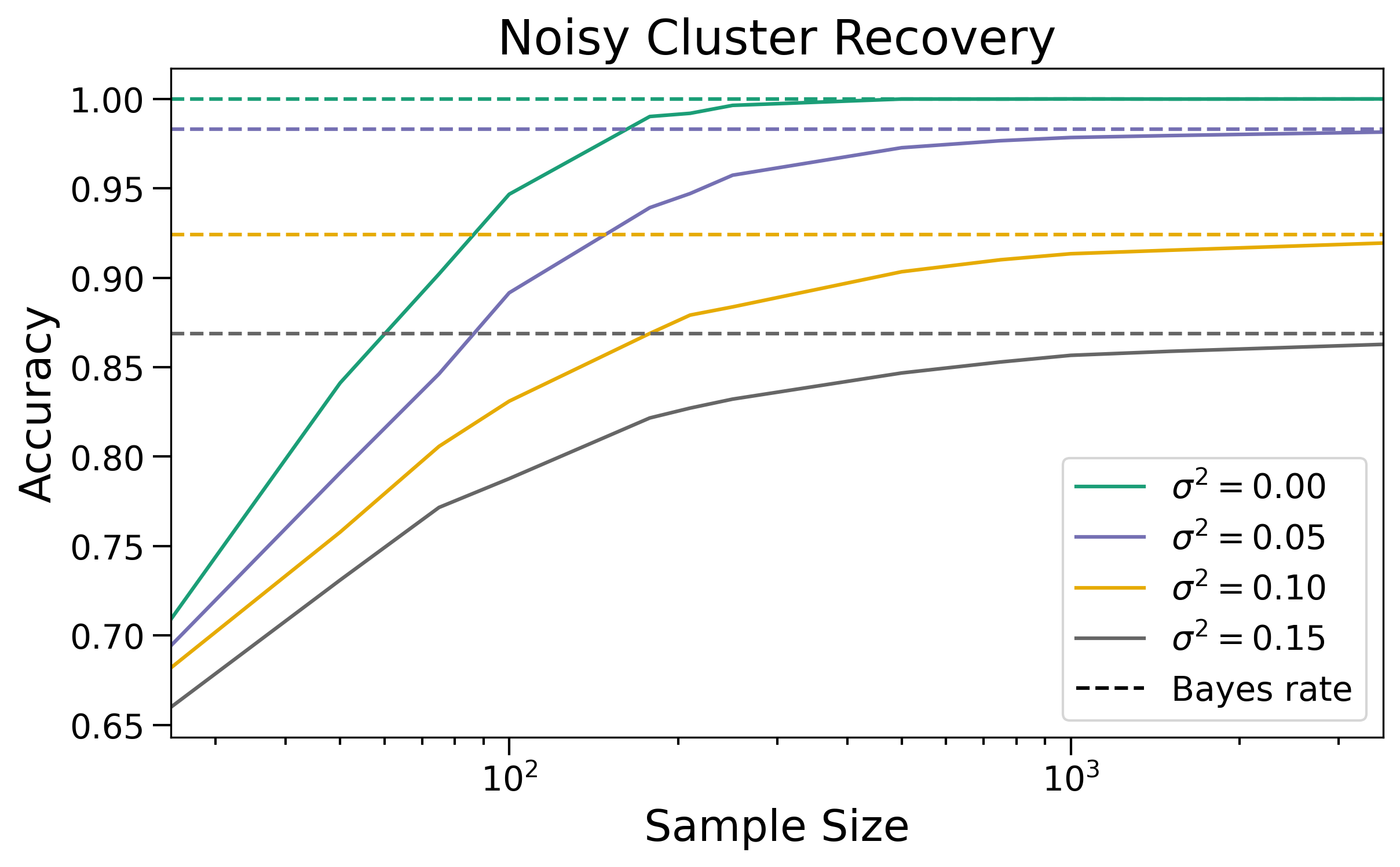}
        \caption{(Noisy) Classification curves}
        \label{fig:noisy:acc}
    \end{subfigure}
    \hfill
    \begin{subfigure}{0.5\linewidth}
        \centering
        \includegraphics[width=0.95\linewidth]{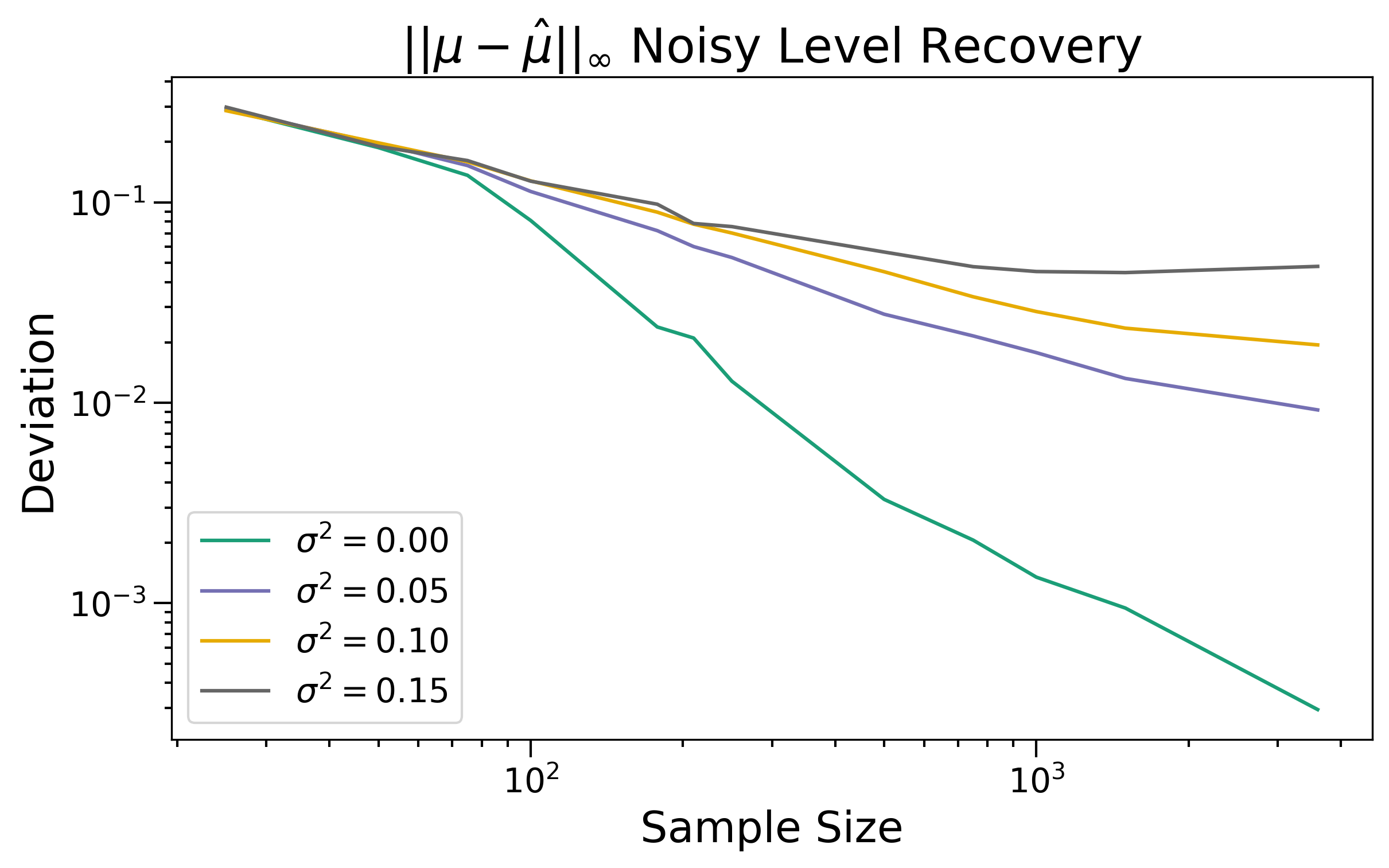}
        \caption{(Noisy) Deviation curves}
        \label{fig:noisy:dev}
    \end{subfigure}
     \caption{\altmin recovery results for a noisy simulated setting. Bayes error rates for classification are shown as dashed lines for the various noise levels.}
      \label{fig:simul:noisy}
\end{figure}

Four settings were considered for noiseless recovery: $(M,\beta) \in \{(2,1), (2,2), (3,2), (3,3)\}$. Cluster recovery and deviation results for the four settings can be found in Figure~\ref{fig:simul:noiseless}. For each setting of the optimization problem~\eqref{opt:recovery}, worst-case recovery bounds, shown dashed in Figure~\ref{fig:simul:noiseless}, were calculated using Theorem~\ref{thm:misclass:v1} and Proposition~\ref{prop:dev_bnd}. The \altmin algorithm stays well within these worst-case bounds, demonstrating the effectiveness of the simple blockwise updates for specific problem settings. 

For noisy recovery, the setting with $M=\beta = 3$ was considered at noise levels $\sigma^2 \in \{0,\,0.05,\,0.1,\,0.15\}$. Cluster recovery and deviation results for these four settings can be found in Figure~\ref{fig:simul:noisy}. In each of the noisy settings, \altmin approaches the Bayes error of what is expected for a perfect classifier.

We note that the rate at which \altmin approaches Bayes error seems faster for the cases where $\sigma^2$ is low. This may suggest that the \altmin algorithm is well-suited for smooth field, cluster recovery problems which experience low amounts of background noise.

\subsection{MRI Decontamination}\label{sec:mri_expmt}

For application, we return to the motivating MRI bias field problem. This is a real-world example where the magnitude of the inhomogeneity $f^*$ and the tissue intensity $g^*$ are much larger than the scale of the background noise $\sigma^2$~\citep{Asher10}. As we have seen in Section~\ref{sec:simul:experiments}, this is a type of problem which is a good candidate for the \altmin algorithm.

To make our experiment quantitative, we consider a 4-class, strongly-biased variant of the BrainWeb~\citep{Cocosco97} phantom. The field estimation step~\eqref{opt:rep_opt} is carried out using a Python spline routine \texttt{csaps}~\citep{Prilepin23}. This routine uses an RKHS tensor product of univariate smoothing splines to fit the multidimensional data. Relevant \texttt{csaps} smoothing parameters were selected using a post-fitting process. In practice, smoothing parameters would be selected using a validation set of data which corresponds to a specific coil cluster or MRI scanner.

\begin{figure}[t]
    \begin{subfigure}{0.3795\linewidth}
        \centering
        \includegraphics[width=\linewidth]{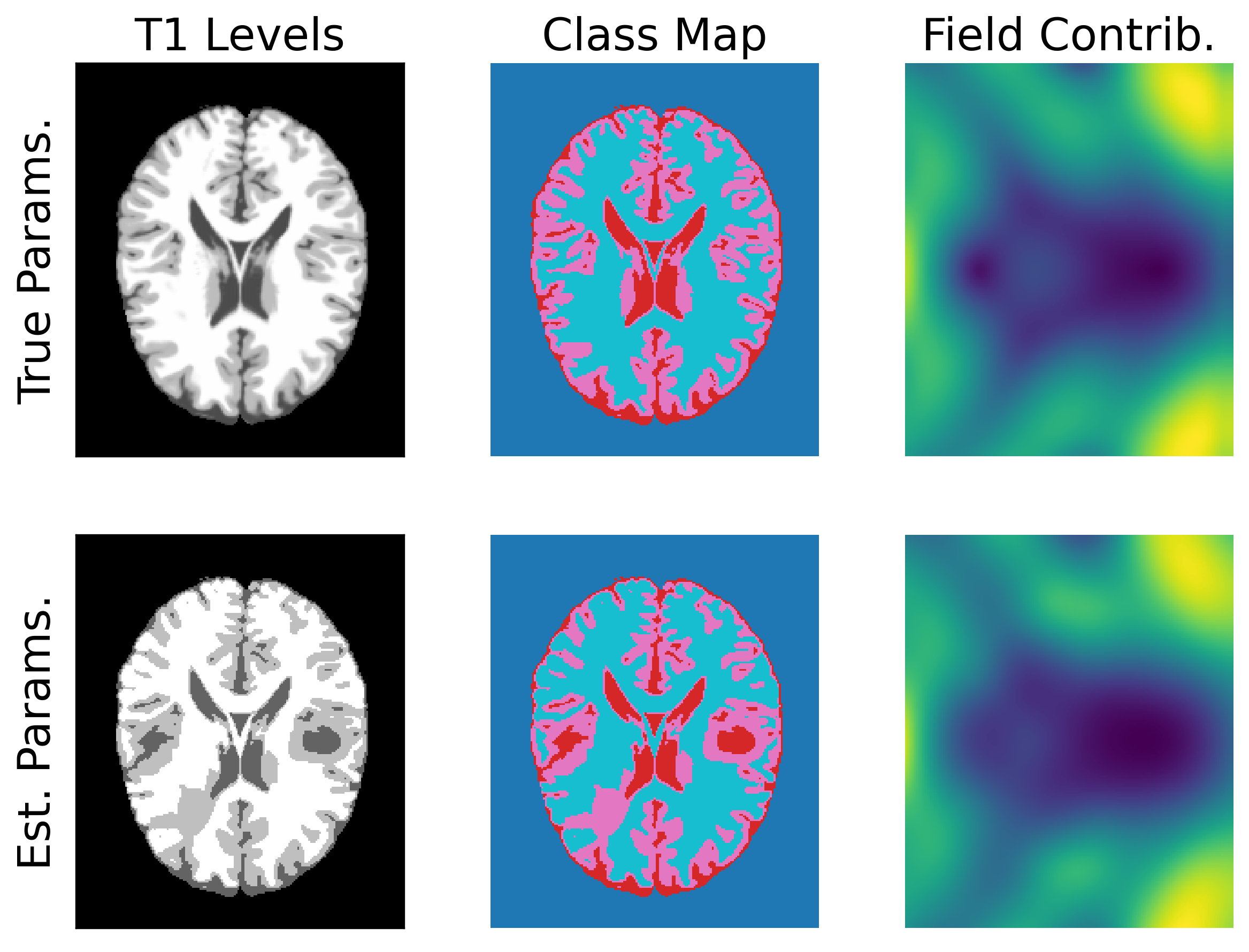}
        \caption{Single sequence recovery}
        \label{fig:t1_seq_debias}
    \end{subfigure}
    \hfill
    \begin{subfigure}{0.62\linewidth}
        \centering
        \includegraphics[width=\linewidth]{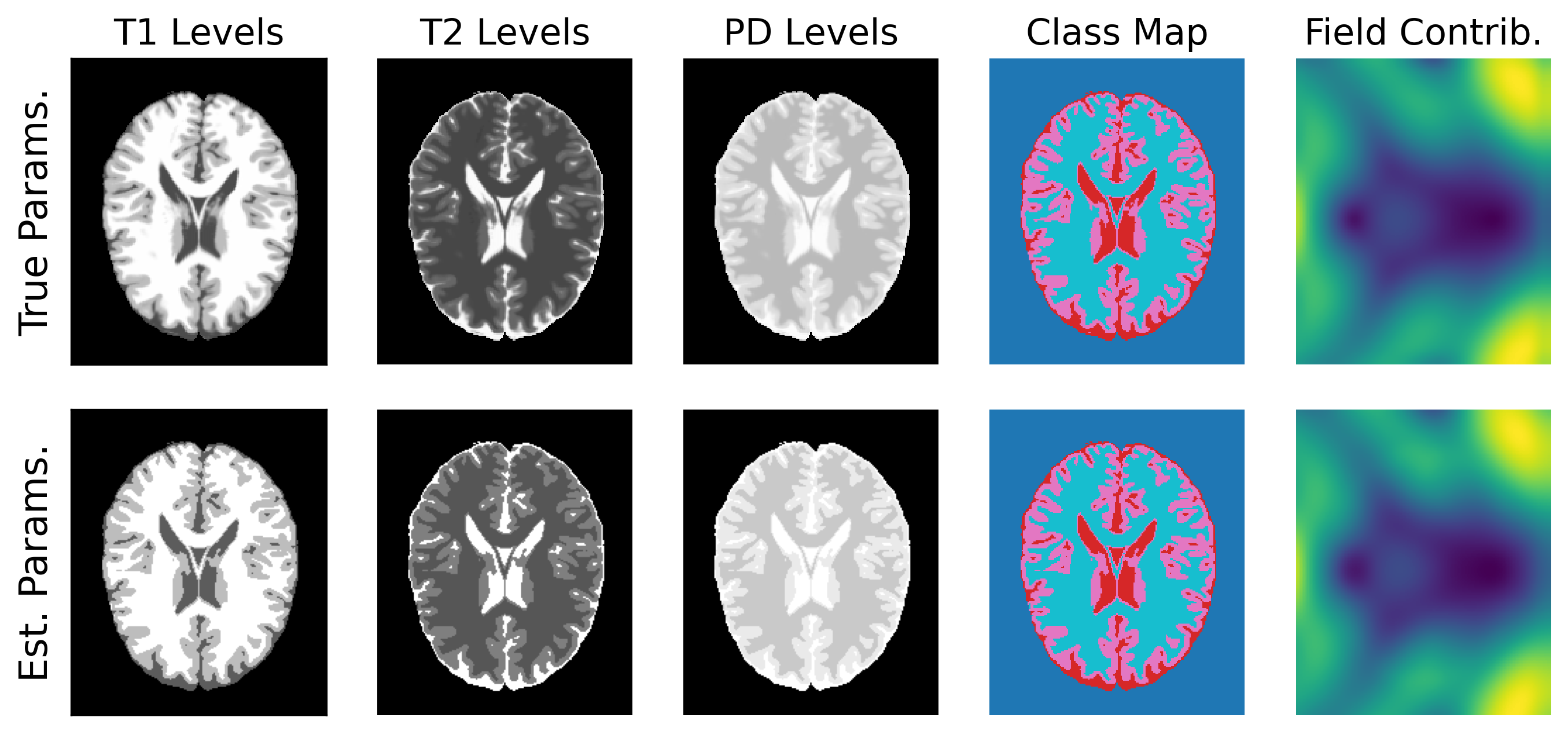}
    \caption{Multi-sequence recovery}
        \label{fig:all_seq_debias}
    \end{subfigure}
     \caption{\altmin decomposition 
     for the biased BrainWeb dataset. Class maps of the single sequence setting show anomalous tissue patches in areas where the field changes most rapidly.}
      \label{fig:debias}
\end{figure}

\begin{figure}
  \begin{subfigure}{0.5\linewidth}
        \centering
        \includegraphics[width=0.95\linewidth]{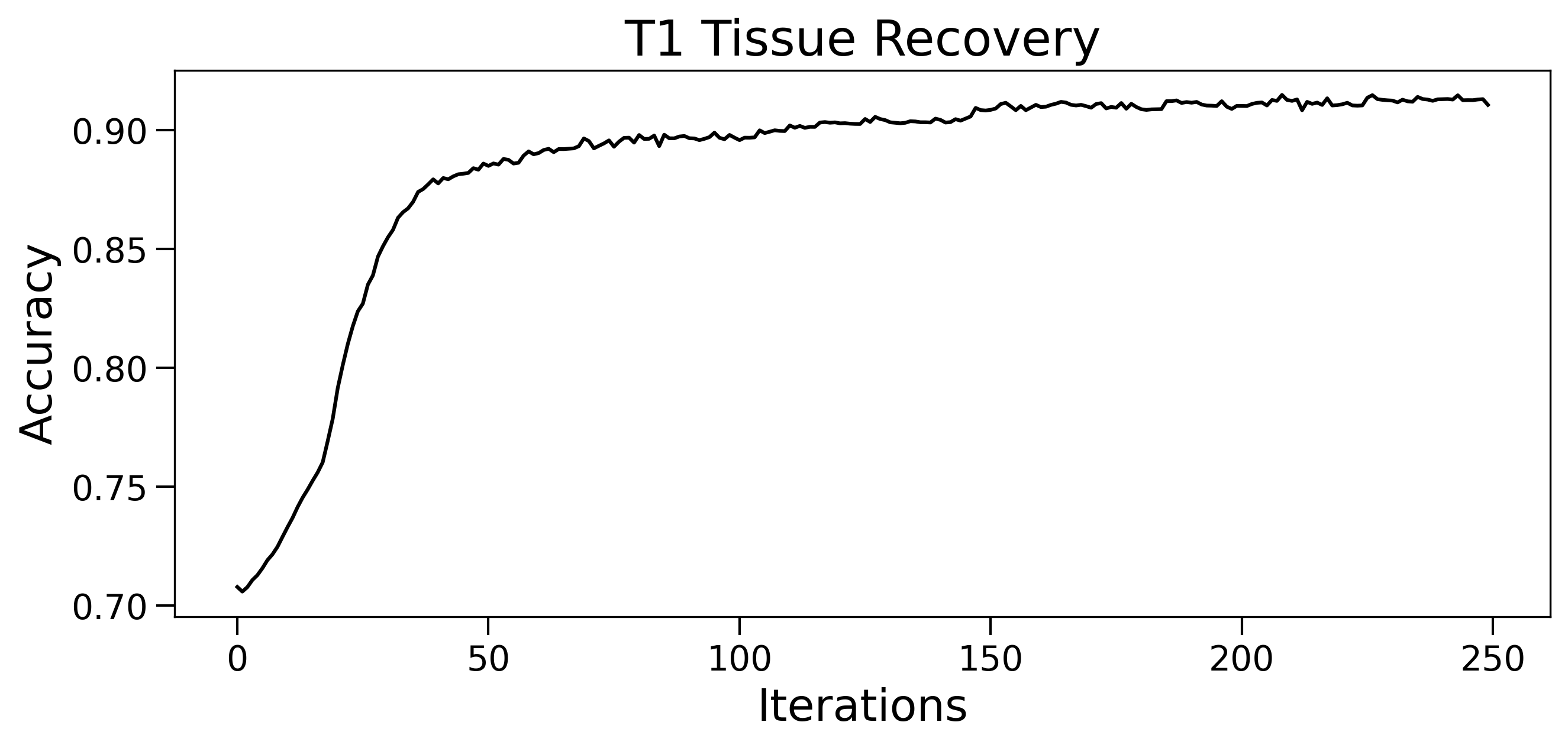}
        \caption{(Single) Accuracy vs. iteration}
         \label{fig:t1_acc_err}
    \end{subfigure}
    \hfill
    \begin{subfigure}{0.5\linewidth}
        \centering
         \includegraphics[width=0.95\linewidth]{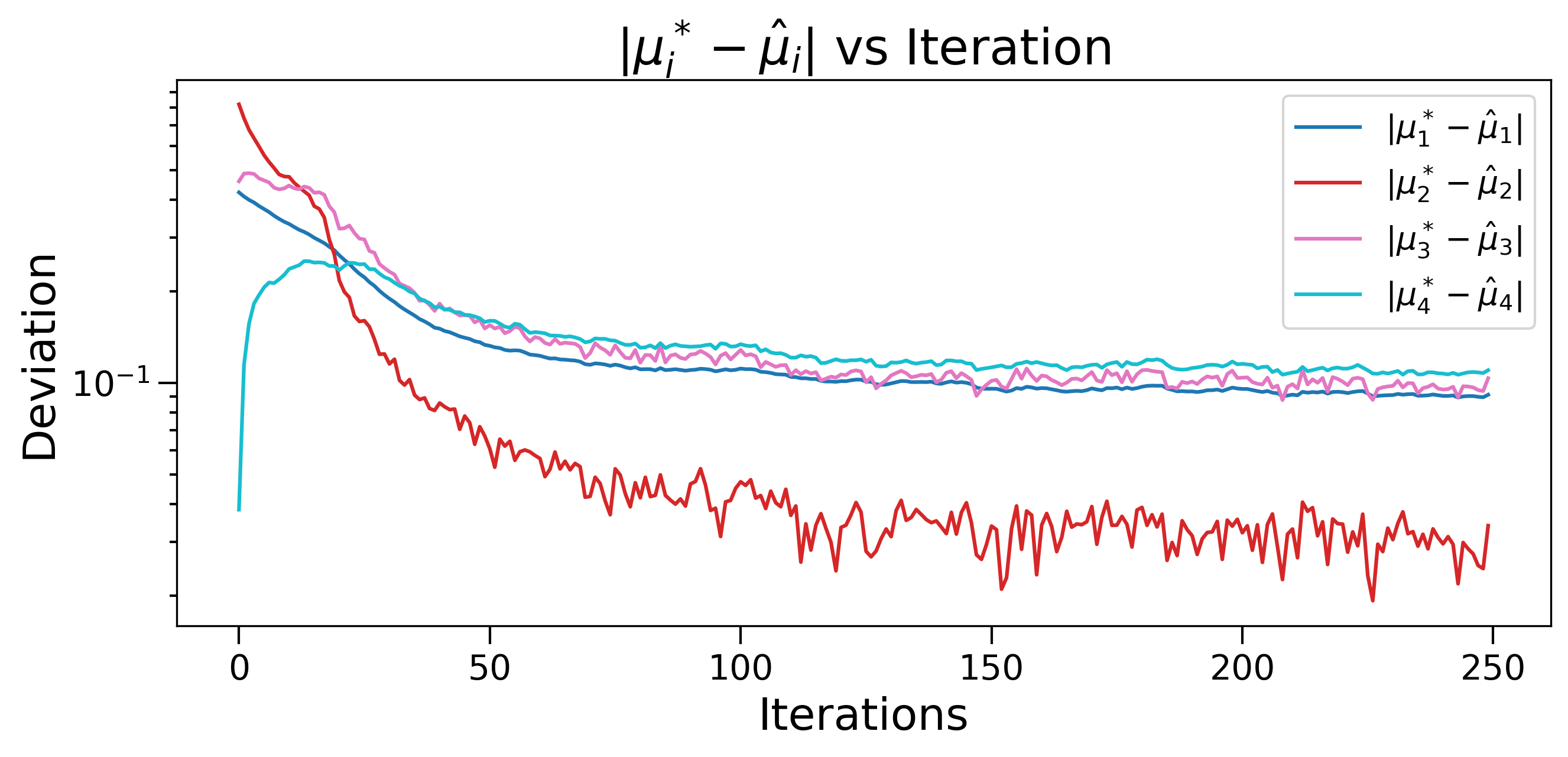}
        \caption{(Single) Deviation vs. iteration}
        \label{fig:t1_lvl_err}
    \end{subfigure}
    \begin{subfigure}{0.5\linewidth}
        \centering
         \includegraphics[width=0.95\linewidth]{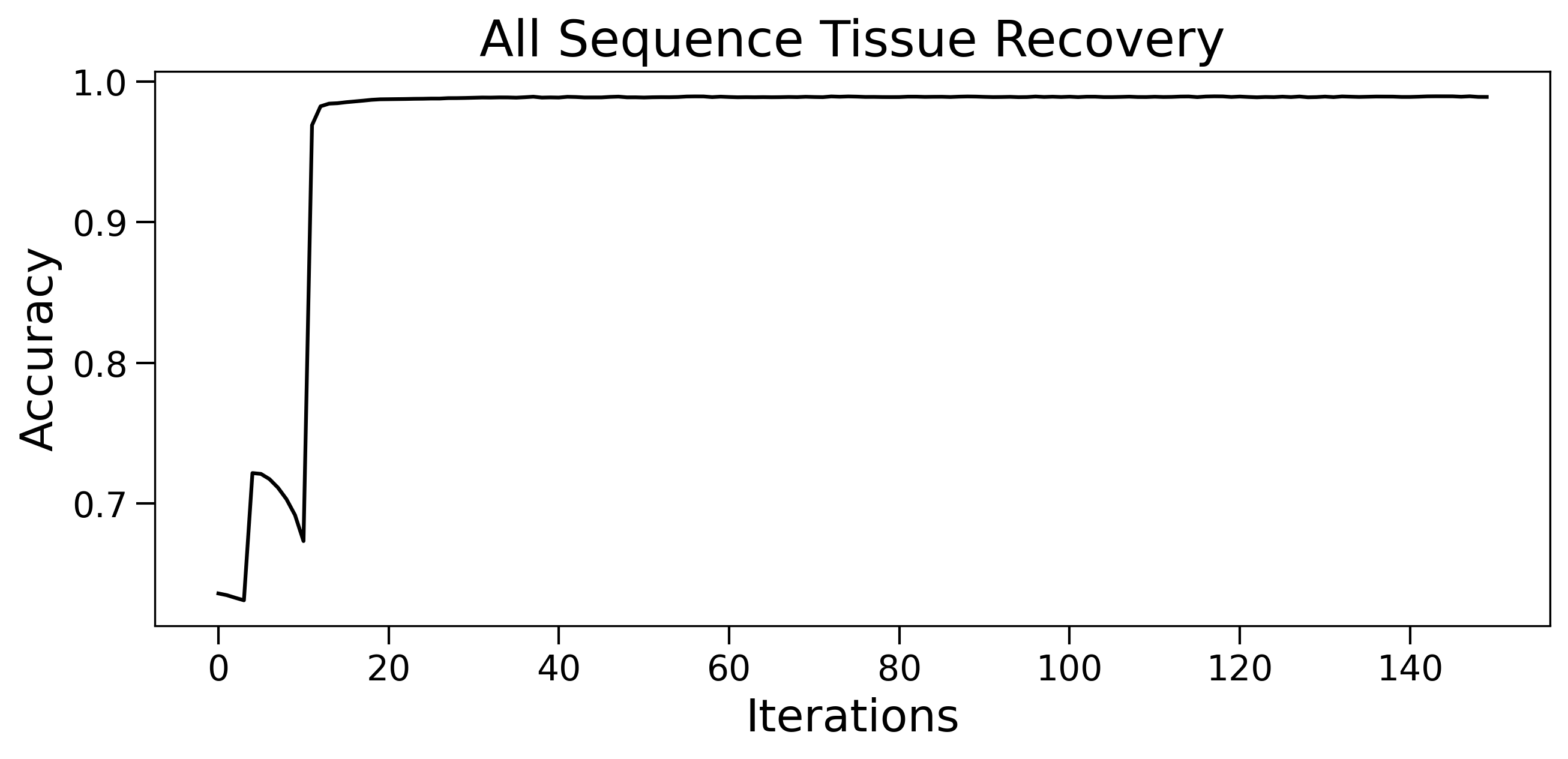}
        \caption{(Multi) Accuracy vs. iteration}
         \label{fig:all_acc_err}
    \end{subfigure}
    \hfill
    \begin{subfigure}{0.5\linewidth}
        \centering
        \includegraphics[width=0.95\linewidth]{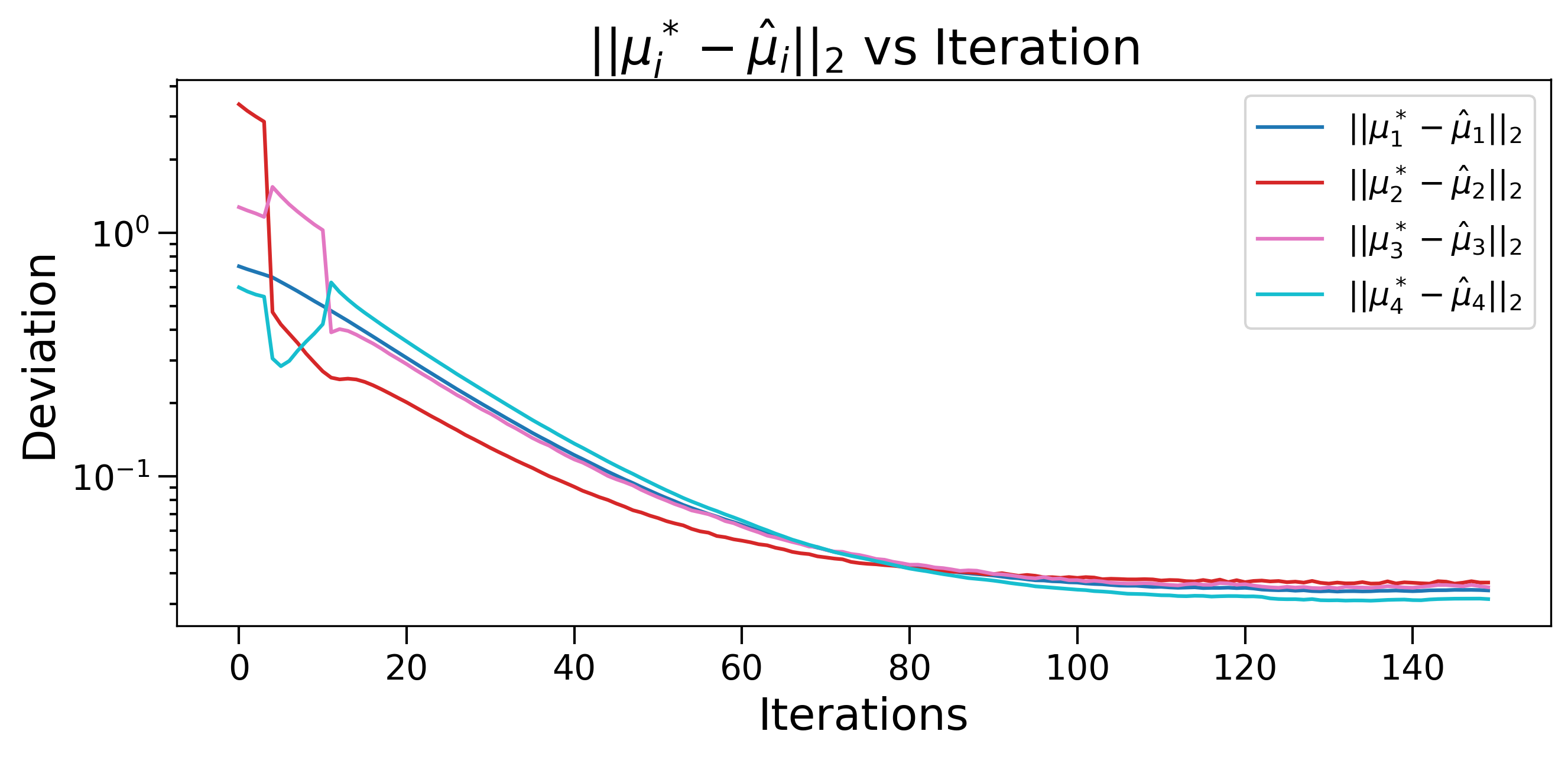}
        \caption{(Multi) Deviation vs. iteration}
        \label{fig:all_lvl_err}
    \end{subfigure}
    \caption{Cluster and level accuracy of the \altmin 
    algorithm on the biased BrainWeb phantom. Final accuracies for single and multi-sequence settings are 91.07\% and 98.91\% respectively. Level deviations in the multi-sequence setting are calculated with respect to the vector 2-norm.}
    \label{fig:mult:results}
\end{figure}

For implementation, we consider modeling the bias field for both single sequence and multi-sequence scans. In a multi-sequence scan, it is understood that the bias field does not vary much between sequences~\citep{Belaroussi06}. For this reason, we consider the following general $p$-sequence data model
\[
\bm y(x) = f^*(x)\cdot \bm \mu^*(x), \quad\text{for}\;x\in\mathcal{X}
\]
where levels $\bm\mu^*(x)$ take value in $\mathbb{R}^p$ and bias $f^*(x)$ is still a scalar function.

Bias field and tissue decomposition results for the single and multi-sequence setting can be found in Figure~\ref{fig:debias} with the respective \altmin optimization results found in Figure~\ref{fig:mult:results}. The presence of redundant sequencing data, albeit at different intensity scalings, seems to significantly improve \altmin convergence as shown in Figures~\ref{fig:all_acc_err}-\ref{fig:all_lvl_err}. This also translate to 
an improved performance, as many of the anomalous tissue patches seen in Figure~\ref{fig:t1_seq_debias} no longer occur in Figure~\ref{fig:all_seq_debias}. 
Additional experiments comparing \altmin to other medical debiasing methods can be found in Appendix~\ref{app:add:exper}.

\section{CONCLUSION}


In this paper, we defined the problem of composite signal decomposition for continuous contaminants and step-wise signals. We outlined recovery conditions that leverage the local and global topology of the data including: connectivity, minimum true level deviation, and the degree of oscillation of the contaminant. These quantities are natural, and their roles in recovery intuitively clear, allowing for a high-level understanding to be easily derived from our theoretical finding.

Besides identifiability, we developed a practical algorithm \altmin for handling contaminants that reside within an RKHS. 
This algorithm can be viewed as an extension of both kernel ridge regression (KRR) and $k$-means, with updates to each being performed alternately. MSE bounds for the algorithm were provided in terms of the spectral properties of the data, leading to a ``one-step'' consistency result in the large sample limit. 

We evaluated \altmin empirically on both simulated and real-world data. In the case of simulated data, \altmin operated well within the worst-case theory bounds outlined in Section~\ref{sec:recovery}. When the data was further corrupted by noise, \altmin approached the best possible classification rates for the given data generating process.
In the real-world study, we conducted an MRI tissue recovery experiment, illustrating how tensor products of smoothing splines can be employed to estimate contaminant MRI bias fields.
Given redundant data on the same bias field, \altmin significantly enhanced clustering performance and overall optimization stability.


These empirical studies, alongside the identifiability theory of Section~\ref{sec:recovery}, suggest that  step-and-smooth decompositions are attainable within worst-case optimality guarantees. Regarding application, the alternating optimization of \altmin appears well-suited for data-dense tasks, especially when data is spatially uniform and low in noise. In this context, decomposition problems akin to MRI multi-sequence recovery could be promising avenues for further applications of the \altmin algorithm.


\bibliography{arxiv}



\appendix
\section{Identifiability Proofs}
\label{app:ident:proofs}
Any optimal candidate solution $(\widehat f, \widehat \mu, \widehat z)$ to~\eqref{opt:recovery} which is fit to data $\{y_i\}$ generated from~\eqref{eq:noiseless} must satisfy
\begin{align}\label{eq:opt:equality}
    f^*(x_i) + \mu^*_{z_i^*} = \fh(x_i) + \muh_{\zh_i},\quad\text{for all}\;i \in [n]. 
\end{align}
Since $\fh - f^* \in \Fc_{2\omega}(\mathcal{X})$, we may instead 
analyze the discrepancy 
\[
g(x_i) = \mu^*_{z_i^*} - \muh_{\zh_i},\quad\text{for all}\;i=1,\ldots,n,
\]
for $g \in \Fc_{2\omega}(\mathcal{X})$. In addition, we will assume that function $g:\mathcal{X}\rightarrow\mathcal{Y}$ takes values on a normed vector space $(\mathcal{Y},\, \|\cdot\|)$. As a result, the modulus of continuity $\omega$ will be related to the induced norm-metric as $\|g(x) - g(x')\| \leq \omega\big(d(x,x')\big)$.

\medskip
The following result is the main ingredient in the proof of Theorem~\ref{thm:misclass:v1}:
\begin{thm}\label{thm:misclass}
Suppose for $g \in \Fc_{2\omega}(\mathcal{X})$ we have $g(x_i) = \mu^*_{z_i^*} - \muh_{\zh_i}$ for all $i\in [n]$ where $\bm z^* = (z_i^*)$ and $\bm \zh = (\zh_i)$ both belong to $[M]^n$. Assume the following holds:
\begin{enumerate}[(a)]
\item $\|\mu^*_k - \mu^*_\ell\|\geq \gamma$ for all $k\neq \ell$.
\item $G_\rho(X)$ is connected for some 
$\rho$ with 
$2\, \omega(\rho) < \gamma / M$.
\end{enumerate}
Then for all $i,j \in [n]$ we have
\begin{equation}\label{cond:equality}
\zh_i = \zh_j \implies z_i^* = z_j^*.    
\end{equation}
\end{thm}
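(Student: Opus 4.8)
The plan is a proof by contradiction built on one elementary estimate: along any edge of $G_\rho$ the discrepancy $g$ moves by at most $2\omega(\rho) < \gamma/M$, whereas two samples that share an estimated label but not a true label are forced, through $g(x_i) = \mu^*_{z_i^*} - \muh_{\zh_i}$ together with hypothesis (a), to have $g$-values at least $\gamma$ apart. Concretely, for any edge $(a,b)\in G_\rho$ I would first record $\|g(x_a)-g(x_b)\| \le 2\omega(d(x_a,x_b)) \le 2\omega(\rho) < \gamma/M$ (using monotonicity of $\omega$); when $\zh_a = \zh_b$ the left side equals $\|\mu^*_{z^*_a}-\mu^*_{z^*_b}\|$, so (a) forces $z^*_a = z^*_b$. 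This already gives~\eqref{cond:equality} for adjacent $i,j$.

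The real difficulty is that the estimated cluster $\{\,v:\zh_v=\ell\,\}$ need not be connected in $G_\rho$, so a general pair $i,j$ with $\zh_i=\zh_j$ is only joined by a path that may leave and re-enter that cluster, and the naive bound $\|g(x_i)-g(x_j)\| < (\text{path length})\cdot\gamma/M$ is useless. To get around this I would pass to an extremal bad pair and count \emph{distinct} $g$-values rather than steps. Assuming~\eqref{cond:equality} fails, pick a bad pair $(i,j)$ — meaning $\zh_i=\zh_j=:\ell$ and $z^*_i\ne z^*_j$ — of minimum graph distance $d=d_{G_\rho}(i,j)$ (finite since $G_\rho$ is connected; $d\ge 2$ by the edge estimate), and fix a shortest path $i=v_0,v_1,\dots,v_d=j$. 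Minimality forces that no pair $(v_s,v_t)$ with $0\le s<t\le d$ and $(s,t)\ne(0,d)$ is bad, since $v_s,\dots,v_t$ is a path of length $t-s\le d-1<d$.

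Two consequences follow. First, $\zh_{v_t}\ne\ell$ for every interior $0<t<d$: otherwise $(v_0,v_t)$ and $(v_t,v_d)$, not being bad and having equal estimated endpoints, would chain $z^*_i=z^*_{v_t}=z^*_j$. Second, for interior indices $s,t$ the equality $\zh_{v_s}=\zh_{v_t}$ forces $z^*_{v_s}=z^*_{v_t}$, so $g(x_{v_s})=\mu^*_{z^*_{v_s}}-\muh_{\zh_{v_s}}$ is determined by the (non-$\ell$) estimated label at $v_s$. Hence $\bigl(g(x_{v_s})\bigr)_{s=0}^{d}$ attains at most $M-1$ interior values, plus the two endpoint values $\mu^*_{z^*_i}-\muh_\ell$ and $\mu^*_{z^*_j}-\muh_\ell$, i.e.\ at most $M+1$ distinct values overall. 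Now compress: whenever $g(x_{v_s})=g(x_{v_t})$ with $s<t$, delete the entries in between, keeping the endpoints; the spliced adjacency still obeys $\|g(x_{v_s})-g(x_{v_{t+1}})\|=\|g(x_{v_t})-g(x_{v_{t+1}})\|<\gamma/M$. The resulting subsequence has distinct values and retains $v_0,v_d$, hence at most $M$ consecutive gaps, each below $\gamma/M$; therefore $\|g(x_{v_0})-g(x_{v_d})\|<\gamma$. But $g(x_{v_0})-g(x_{v_d})=\mu^*_{z^*_i}-\mu^*_{z^*_j}$ has norm $\ge\gamma$ by (a) since $z^*_i\ne z^*_j$ — a contradiction, so no bad pair exists, which is exactly~\eqref{cond:equality}.

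I expect the combinatorial step — reducing to a minimal bad pair and bounding the number of distinct discrepancy values along the connecting path by $M+1$ rather than by its length — to be the main obstacle; it is where the \emph{global} hypothesis that $G_\rho$ is connected (as opposed to each estimated cluster being connected) enters essentially, and it is precisely what the $1/M$ factor in hypothesis (b) pays for.
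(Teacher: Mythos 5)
Your proof is correct and is essentially the paper's argument in a different guise: the paper runs a strong induction on path length and collapses the telescoping sum via an edge subsequence indexed by the $\le M$ distinct estimated labels, while you use a minimal bad pair plus compression of repeated $g$-values to the same effect, with both exploiting the identical key fact that sub-pairs of the path cannot be bad, so each of at most $M$ effective gaps contributes less than $\gamma/M$. The two formulations (induction vs.\ extremal counterexample, label subsequence vs.\ value compression) are interchangeable, so no substantive difference remains.
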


\begin{proof}
Start by considering the induction hypothesis that, for any path $\mathcal{P}\subseteq G_\rho(X)$ of length $T$, all element pairs $i,j \in \mathcal{P}$ satisfy (\ref{cond:equality}). The base case of $T=0$ holds trivially with $i=j$.

\begin{figure*}[t]
    \centering

\usetikzlibrary{shapes}

\colorlet{CLS1}{Apricot!60}
\colorlet{CLS2}{Cerulean!60}
\colorlet{CLS3}{JungleGreen!60}
\colorlet{CLS4}{WildStrawberry!60}
\colorlet{sel_path}{blue!70}

\providecommand\zh{\widehat z}
\tikzset{site/.style={shape=circle,draw=black, minimum size=1.25cm, thick, opacity=1}}
\resizebox{\textwidth}{!}{%
\begin{tikzpicture}[scale=0.5]
    \node[site, fill=CLS1] (A) at (0,0) {{\Large $i_{1}$}};
    \node[rectangle, draw=black, fill=CLS1, below, thick, opacity=1] at (0,-1.5) {$\phi(\zh_{i_1}) = 7$};
    \node[rounded rectangle, draw=black, below, thick, opacity=1] at (0,2.75) {{\large $t_1 = 1$}};
    
    \node[site, fill=CLS2] (B) at (5,0) {{\Large $i_{2}$}};
    \node[rectangle, draw=black, fill=CLS2, below, thick, opacity=1] at (5,-1.5) {$\phi(\zh_{i_2}) = 4$};
    
    \node[site, fill=CLS3] (C) at (10,0) {{\Large $i_{3}$}};
    \node[rectangle, draw=black, fill=CLS3, below, thick, opacity=1] at (10,-1.5) {$\phi(\zh_{i_3}) = 3$};
    
    \node[site, fill=CLS2] (D) at (15,0) {{\Large $i_{4}$}};
    \node[rectangle, draw=black, fill=CLS2, below, thick, opacity=1] at (15,-1.5) {$\phi(\zh_{i_4}) = 4$};
    \node[rounded rectangle, draw=black, below, thick, opacity=1] at (15,2.75) {{\large $t_2 = 4$}};
    
    \node[site, fill=CLS4] (E) at (20,0) {{\Large $i_{5}$}};
    \node[rectangle, draw=black, fill=CLS4, below, thick, opacity=1] at (20,-1.5) {$\phi(\zh_{i_5}) = 6$};
    
    \node[site, fill=CLS4] (F) at (25,0) {{\Large $i_{6}$}};
    \node[rectangle, draw=black, fill=CLS4, below, thick, opacity=1] at (25,-1.5) {$\phi(\zh_{i_6}) =6$};
    \node[rounded rectangle, draw=black, below, thick, opacity=1] at (25,2.75) {{\large $t_3 = 6$}};
    
    \node[site, fill=CLS1] (G) at (30,0) {{\Large $i_{7}$}};
    \node[rectangle, draw=black, fill=CLS1, below, thick, opacity=1] at (30,-1.5) {$\phi(\zh_{i_7}) = 7$};

    \path[line width=0.45mm] (A) edge node {} (B);
    \path[line width=0.45mm] (B) edge node {} (C);
    \path[line width=0.45mm] (C) edge node {} (D);
    \path[line width=0.45mm] (D) edge node {} (E);
    \path[line width=0.45mm] (E) edge node {} (F);
    \path[line width=0.45mm] (F) edge node {} (G);
    
    \path[line width=0.45mm, above] (A.10) edge[sel_path] node {} (B.170);
    \path[line width=0.45mm, above] (D.10) edge[sel_path] node {} (E.170);
    \path[line width=0.45mm, above] (F.10) edge[sel_path] node {} (G.170);
    
    \path[line width=0.45mm] (B) edge [bend left=40, sel_path, densely dashed] (D);
    \path[line width=0.45mm] (E) edge [bend left=50, sel_path, densely dashed] (F);
\end{tikzpicture}
}
    \caption{Example use of the $\phi(r)$ function and $\{(u_q,v_q)\}_{q=1}^Q$ sequence, shown in blue above, for a 4-class path of length~7. The colors denote the estimated cluster labels. This example has $\{(u_q,v_q)\}_{q=1}^Q=\{(i_1,i_2),(i_4,i_5),(i_6,i_7)\}$ with $Q = 3$.}
    \label{fig:path:construction} 
\end{figure*}
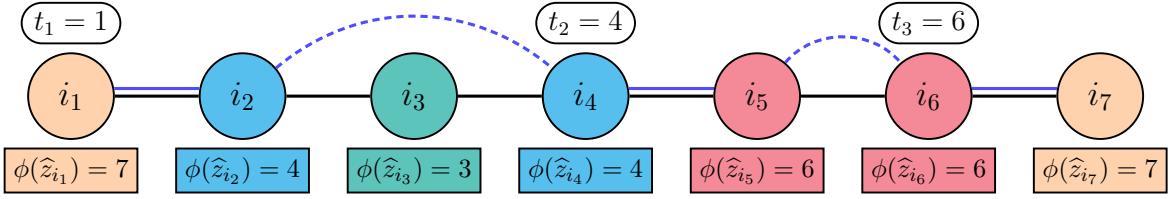

Throughout the proof, by the label of a node $i$, we mean its estimated label $\zh_i$. Consider a general path $\mathcal{P} = \{i_t\}_{t=1}^{T+1}$ of length $T+1$ inside $G_\rho(X)$. As both $\{i_t\}_{t=1}^T$ and $\{i_t\}_{t=2}^{T+1}$ are paths of length $T$, we only need to verify (\ref{cond:equality}) for $i_1$ and $i_{T+1}$. Therefore, for our induction step it is sufficient to show that $\zh_{i_1} = \zh_{i_{T+1}}$ and $z_{i_1}^*\neq z_{i_{T+1}}^*$ cannot simultaneously hold for the given assumptions (a) and (b).

For the sake of contradiction, assume $\zh_{i_1} = \zh_{i_{T+1}}$ and $z_{i_1}^* \neq z_{i_{T+1}}^*$. Under this assumption the induction hypothesis guarantees
\begin{equation}\label{cond:unique}
    \zh_{i_t}\neq \zh_{i_1}\quad\text{for}\;1<t<T+1.
\end{equation}
Note that if this was not the case with
$$\zh_{i_1} = \zh_{i_t} = \zh_{i_{T+1}}\quad\text{for some}\; 1 < t < T+1,$$
then the condition $z_{i_1}^*\neq z_{i_{T+1}}^*$ would have caused a contradiction at the earlier induction step $\max\{(T+1)-t,t-1\}$.

Next let $\mathcal{R}$ be the set of labels $\zh_{i_t}$ on path $\mathcal{P}$. Function $\phi(r)$ will be the index of the last node we see on the path from $i_1$ to $i_{T+1}$ that has label $r$, that is,
$$\phi(r) = \max_{t\in [T+1]}\{t:\;\zh_{i_t}=r\}.$$
We construct an edge sequence $\{(u_q,v_q)\}_{q=1}^Q$---where $Q$ is determined by the construction---recursively as follows: Let $(u_1,v_1) = (i_1,i_2)$ and for $q = 2,\ldots,Q,$
\begin{align*}
    (u_q,v_q) = (i_{t_q},i_{t_q + 1})\quad\text{where}\quad t_q = \phi(\zh_{v_{q-1}}).
\end{align*}
The construction continues until $t_Q = T$, so that
$(u_Q, v_Q) = (i_T, i_{T+1})$. See Figure~\ref{fig:path:construction} for a concrete example.
By construction, the labels of $v_{q-1}$ and $u_q$ are the same, while the labels of $v_{q-1}$ and $v_q$ are necessarily different. By this latter property, the labels of $v_1,\ldots,v_{Q-1}$ are distinct elements of $\mathcal{R}$. The added uniqueness condition of (\ref{cond:unique}) gives that the label of $v_Q$ is also distinct from $v_1,\ldots,v_{Q-1}$, hence $Q\leq |\mathcal{R}|$.

Using $\zh_{v_{q-1}} = \zh_{u_q}$, we obtain the decomposition
\begin{equation}\label{decomp:hat}
\muh_{\zh_{u_1}} - \muh_{\zh_{v_Q}} = \sum_{q=1}^Q (\muh_{\zh_{u_q}} - \muh_{\zh_{v_q}}).    
\end{equation}
From the induction hypothesis, $\zh_{v_{q-1}} = \zh_{u_q}$ implies $z_{v_{q-1}}^* = z_{u_q}^*$ for $2\leq q\leq Q$. This gives the decomposition
\begin{equation}\label{decom:star}
   \mu_{z^*_{u_1}}^* - \mu_{z^*_{v_Q}}^* = \sum_{q=1}^Q (\mu^*_{z^*_{u_q}} - \mu^*_{z^*_{v_q}}).   
\end{equation}
Moreover, since $u_q$ and $v_q$ are adjacent on the path, they satisfy $d(x_{u_q},x_{v_q})\leq \rho$, which by assumption (b) implies
\begin{equation}\label{eq:dev}
    \|(\mu_{z^*_{u_q}}^* - \mu_{z^*_{v_q}}^*) - (\muh_{\zh_{u_q}} - \muh_{\zh_{v_q}})\| = \|g(x_{u_q}) - g(x_{v_q})\| < \gamma / M.
\end{equation}
By assumption, $\zh_{u_1} = \zh_{v_Q}$, hence the LHS of (\ref{decomp:hat}) is zero. Then, subtracting decomposition (\ref{decomp:hat}) from (\ref{decom:star}) and using the triangle inequality, we get
$$\|\mu_{z_{u_1}^*}^* - \mu_{z_{v_Q}^*}^*\|\leq \sum_{q=1}^Q\|(\mu^*_{z^*_{u_q}} - \mu^*_{z^*_{v_q}}) - (\muh_{\zh_{u_q}} - \muh_{\zh_{v_q}})\| < Q\,\gamma / M,$$
where the second inequality is by (\ref{eq:dev}). If at the same time $z_{i_1} \neq z_{i_{T+1}}$ then $\mu^*_{z_{u_1}^*}\neq \mu^*_{z_{v_Q}^*}$, and by assumption (a), $\gamma \leq \|\mu^*_{z_{u_1}^*} - \mu^*_{z_{v_Q}^*}\|$. Hence,
$$\gamma < Q\, \gamma/ M \leq |\mathcal{R}|\, \gamma/M.$$
Since $|\mathcal{R}|\leq M$, we arrive at a contradiction. This completes the induction step. Applying our induction claim to the connected $G_\rho(X)$ completes the proof.
\end{proof}

Theorem~\ref{thm:misclass} shows that $\zh$ is a \emph{refinement} of $z^*$. But since both $\zh$ and $z^*$ have the same number of classes ($M$), the classes of $\zh$ should, in fact, coincide with those of $z^*$. This is formalized in the following corollary:

\begin{cor}
Suppose every label in $[M]$ is attained by $\bm z^*$ on $[n]$. Then under the conditions of Theorem \ref{thm:misclass}, the misclassification rate between $\bm z^*$ and $\bm \zh$ is zero.
\end{cor}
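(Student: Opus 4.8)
The plan is to promote the refinement relation of Theorem~\ref{thm:misclass} to an exact match of partitions via a counting argument. By Theorem~\ref{thm:misclass}, $\zh_i = \zh_j \implies z_i^* = z_j^*$ for all $i,j\in[n]$; equivalently, every level set $\{i\in[n]:\zh_i=r\}$ of $\bm\zh$ is contained in a single level set of $\bm z^*$. Writing $\widehat{\mathcal C}$ for the partition of $[n]$ induced by $\bm\zh$ and $\mathcal C=\{\mathcal C_k\}_{k\in[M]}$ for the one induced by $\bm z^*$, this says precisely that $\widehat{\mathcal C}$ refines $\mathcal C$.

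First I would record the elementary fact that a refining partition has at least as many nonempty blocks: each $\mathcal C_k$ is the disjoint union of the (nonempty) blocks of $\widehat{\mathcal C}$ it contains, so the map sending each block of $\widehat{\mathcal C}$ to the unique $\mathcal C_k$ containing it is a surjection, giving $|\widehat{\mathcal C}|\ge|\mathcal C|$. Under the hypothesis that $\bm z^*$ attains every label in $[M]$ we have $|\mathcal C|=M$, while $\bm\zh$ taking values in $[M]$ forces $|\widehat{\mathcal C}|\le M$. Hence $|\widehat{\mathcal C}|=|\mathcal C|=M$, the surjection above is a bijection, and therefore each $\mathcal C_k$ equals exactly one block of $\widehat{\mathcal C}$; i.e., $\widehat{\mathcal C}=\mathcal C$ as partitions of $[n]$.

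Next I would translate this back to labels. Since the two partitions coincide and both $\bm z^*$ and $\bm\zh$ are surjective onto $[M]$, there is a unique permutation $\pi\in S_M$ with $\zh_i=\pi(z_i^*)$ for all $i\in[n]$. With the standard permutation-invariant definition of misclassification rate, $\min_{\pi\in S_M}\frac1n\sum_{i=1}^n 1\{\zh_i\neq\pi(z_i^*)\}$, this $\pi$ witnesses a rate of zero, which is the claim.

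I do not expect a genuine obstacle here; the statement is essentially a pigeonhole observation layered on top of Theorem~\ref{thm:misclass}. The two points that deserve a word of care are: (i) making explicit that $\bm\zh$ must itself use all $M$ labels --- this is exactly where the inequality $|\widehat{\mathcal C}|\ge|\mathcal C|$ enters, and it would fail without the hypothesis that $\bm z^*$ attains all labels; and (ii) stating up front that the misclassification rate is taken modulo relabeling by $S_M$, since without this quotient the conclusion is false (the labels produced by $\bm\zh$ may be an arbitrary permutation of the true ones).
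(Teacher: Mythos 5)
Your proof is correct and is essentially the argument the paper itself gives (only in sketch form, in the sentence preceding the corollary): Theorem~\ref{thm:misclass} makes $\bm\zh$ a refinement of $\bm z^*$, and since $\bm z^*$ uses all $M$ labels while $\bm\zh$ uses at most $M$, the two partitions must coincide up to a permutation of $[M]$. Your explicit counting of nonempty blocks and your remark that the misclassification rate is taken modulo relabeling (consistent with how the paper uses it, e.g.\ in the proof of Proposition~\ref{prop:dev_bnd}) simply fill in the details the paper leaves implicit.
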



Let us now prove Proposition~\ref{prop:dev_bnd}. Under the assumptions of Theorem~\ref{thm:misclass:v1}, we can relabel the classes of $(\bm \zh, \bm \muh)$ so that $\bm\zh = \bm z^*$. Then, it follows from~\eqref{eq:opt:equality} that
\begin{align}\label{eq:opt:equality:v2}
    \fh(x_i) - f^*(x_i) = \mu^*_{z_i^*} - \muh_{z_i^*}\quad\text{for all}\;i \in [n].
\end{align}



\subsection{Proof of Proposition~\ref{prop:dev_bnd}}

For $\delta  \geq \lbd$, the neighbor graph $G_\delta(\cluster)$ 
is connected such that every $k,\ell \in [M]$ has a series of edges $\{(x_{i_t}, x_{j_t})\}_{t=1}^T$ with $d(x_{i_t},x_{j_t})\leq \delta$ such that $z_{i_1}^* = k$, $z_{j_T}^* = \ell$ and
\[
z_{j_{t-1}}^* = z_{i_t}^* \neq  z^*_{j_t}. 
\]
In particular, the condition $ z_{i_t}^* \neq  z^*_{j_t}$ ensures $T \le M-1$. Let $\delta = \lbd$ and with the shorthands $g = \fh - f^*$ and $\Delta_k = \mu_k^* - \muh_k$, we have $g(x_i) = \Delta_{z^*_i}$ for all $i \in [n]$. Then, the following inequality holds for all $k,\ell\in [M]$,
\begin{align*}
    \|\Delta_k - \Delta_\ell\|
    &\leq \sum_{t=1}^T \|g(x_{i_t}) - g(x_{j_t})\|\\
    &\leq T\cdot 2\omega(\lbd) \leq 2(M-1)\cdot \omega(\lbd).
\end{align*} 
Letting 
$\pi_k = |\cluster_k|/n$ be the proportion of class $k$, then
\begin{align*}
    \Bigl\|\Delta_k - \frac{1}{n}\sum_{i=1}^n g(x_i)\Bigr\|
    &= \Bigl\|\Delta_k - \sum_{\ell=1}^M \pi_\ell \Delta_\ell\Bigr\|\\
    &\leq 
    \sum_{\ell=1}^M \pi_\ell \|\Delta_k - \Delta_\ell\|.
\end{align*}
Since $\fh$ is assumed zero-mean, $\frac{1}{n}\|\sum_{i=1}^n g(x_i)\| = \frac{1}{n}\|\sum_{i=1}^n f^*(x_i)\|$. Putting the pieces together, using the triangle inequality and noting that $\sum_\ell \pi_\ell = 1$ finishes the proof.

\section{Supplement to Section~\ref{sec:one:step:analysis}}
\subsection{Proof of Proposition~\ref{lem:beta_lem}}
Let $\bblambda$ be the discrete random variable defined by
\begin{equation}
    \bblambda = \begin{cases}
    \lambda_i,&\text{w.p.}\;\;\gf_i^2/(n\norm{\bm g^*}_\infty^2)\\
    0,&\text{w.p.}\;\; 1 - \norm{\bm g^*}_2^2  /(n\norm{\bm g^*}_\infty^2).
    \end{cases}
\end{equation}
Further define $\psi(\lambda) = (1+ \tau/\lambda)^{-2}$, then
\begin{align*}
    \frac{1}{n}\norm{\Gamma_\tau \bm\gf}_2^2 &= \sum_{i=1}^n \frac{\gf_i^2}{n} \psi(\lambda_i)\leq \norm{\bm g^*}_\infty^2 \,\mathbb{E}[\psi(\bblambda)].
\end{align*}
Define $r$ and $\beta$ as before. Function $\psi(\cdot)$ is non-negative and monotone on $[0,\infty)$ so 
\begin{align}\label{eq:tail_int}
    \mathbb{E}[\psi(\bblambda)] &= \int_0^\infty {\rm Pr}(\psi(\bblambda) > t)\,dt\notag\\
    &= \psi(\lambda_r) + \int_{\psi(\lambda_r)}^{\psi(\lambda_1)} {\rm Pr}(\bblambda > \psi^{-1}(t)) dt \notag\\
    &\leq \psi(\lambda_r) + \int_{\psi(\lambda_r)}^{\psi(\lambda_1)} \bigg(\frac{\lambda_r}{\psi^{-1}(t)}\bigg)^\beta\,dt
\end{align}
Denote the last right-hand side integral as $\mathcal{I}(\beta)$. Integral $\mathcal{I}(\beta)$ is monotone decreasing with $\mathcal{I}(\beta) < \mathcal{I}(\beta')$ for $0\leq \beta' < \beta$. 
Next, the inverse function $\psi^{-1}$ can be lower bounded as
\begin{align}\label{eq:psi_lbd}
    \psi^{-1}(t) = \frac{\tau}{t^{-1/2}-1}\geq \tau\,t^{1/2}(1-t)^{-1/2}.
\end{align}
Restricting focus to $\beta \in [0,2)$ and applying~\eqref{eq:psi_lbd} to integral $\mathcal{I}(\beta)$ yields
\begin{align*}
    \mathcal{I}(\beta) &\leq (\lambda_r/\tau)^\beta \int_{\psi(\lambda_r)}^{\psi(\lambda_1)} t^{-\beta/2}(1-t)^{\beta/2}\,dt\\
    &\leq (\lambda_r/\tau)^\beta \int_0^1 t^{-\beta/2}(1-t)^{\beta/2}\,dt\\
    &= (\lambda_r/\tau)^\beta\,\frac{\Gamma(1-\beta/2) \, \Gamma(1+\beta/2)}{\Gamma(2)}.
\end{align*}
Identity $\Gamma(z) = \Gamma(1+z)/z$ can be used with $\beta \in [0,2)$ to get
\[
\frac{\Gamma(1-\beta/2) \, \Gamma(1+\beta/2)}{\Gamma(2)} \leq \frac{2}{2 -\beta}.
\]
Lastly since $\psi(\lambda) \leq (\lambda/\tau)^2$ and $\mathcal{I}(\beta)$ is monotone decreasing in $\beta$, we have
\begin{equation}\label{bnd:full_beta}
    \frac{1}{n} \norm{\Gamma_\tau \bm \gf}_2^2 \leq 2\norm{\bm g^*}_\infty^2\cdot\max\bigg\{(\lambda_r/\tau)^2,\,\inf_{\beta'\in [0,\beta)} \frac{2}{2-\beta'} \cdot (\lambda_r/\tau)^{\beta'}\bigg\}.
\end{equation} 

Let $\xi \coloneqq \lambda_r/\tau$. For $\xi < 1$, function $h(x) = \xi^x/(2-x)$ achieves global minimum at $x^* = 2 - (\log \frac{1}{\xi})^{-1}$. This 
minimum is non-negative for $\xi < e^{-1/2}$. That is, when $\beta = 2$, we have $\beta'$ approaching 2 as $\lambda_r/\tau$ approaches 0. More specifically, we obtain the following simplification to~\eqref{bnd:full_beta}
\begin{equation}
    \frac{1}{n} \norm{\Gamma_\tau \bm \gf}_2^2  \leq 4\norm{\bm g^*}_\infty^2 \log (1/\xi)\, \xi^{2 - (\log \frac{1}{\xi})^{-1}}.
\end{equation}


\subsection{Sobolev Kernel Rates}

\begin{figure}[t]
    \begin{subfigure}{0.49\linewidth}
        \centering
        \includegraphics[width=\linewidth]{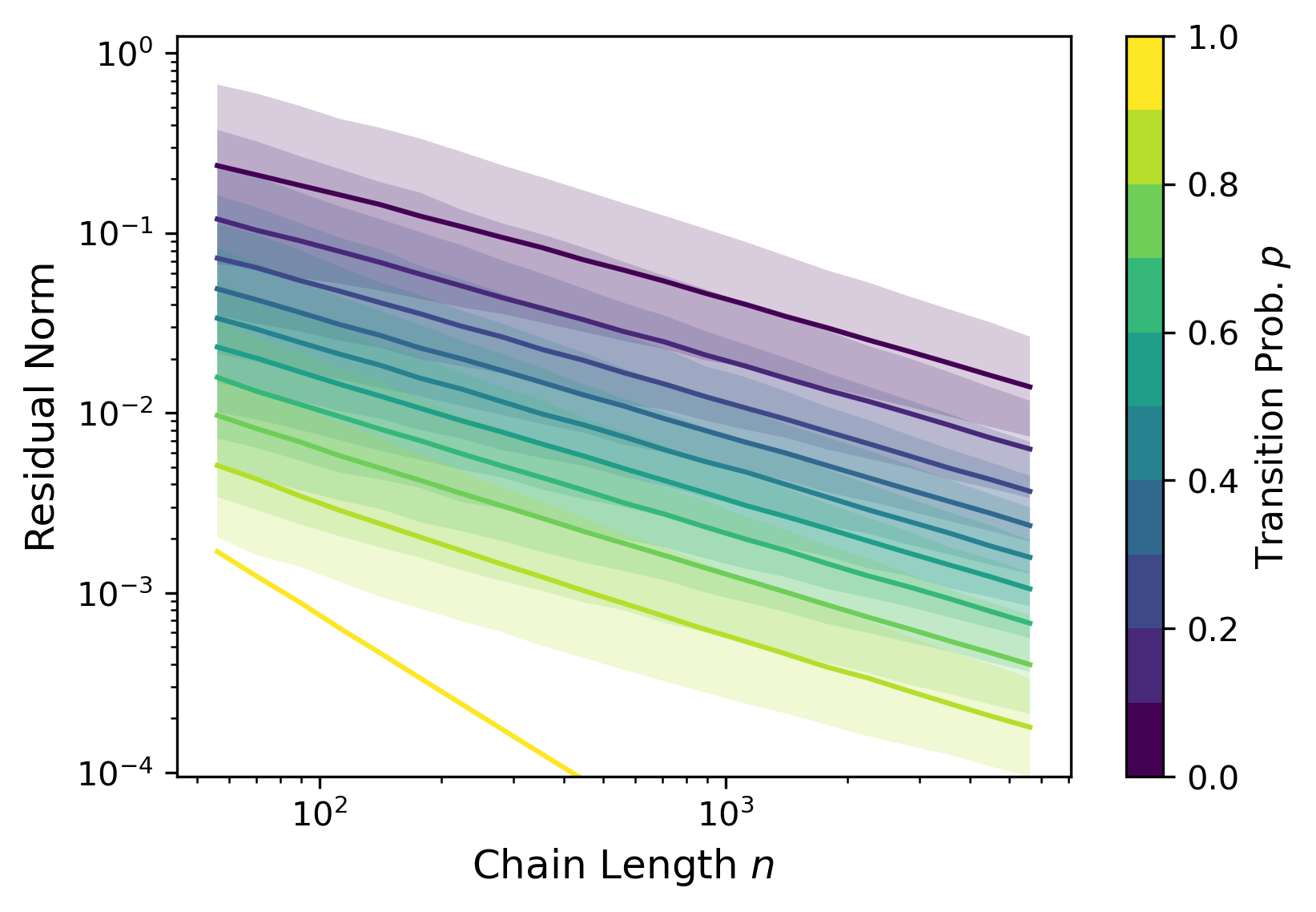}
        \caption{Sobolev-1 with $\tau = n^{-2/3}$.}
    \end{subfigure}
    \hfill
    \begin{subfigure}{0.49\linewidth}
        \centering
        \includegraphics[width=\linewidth]{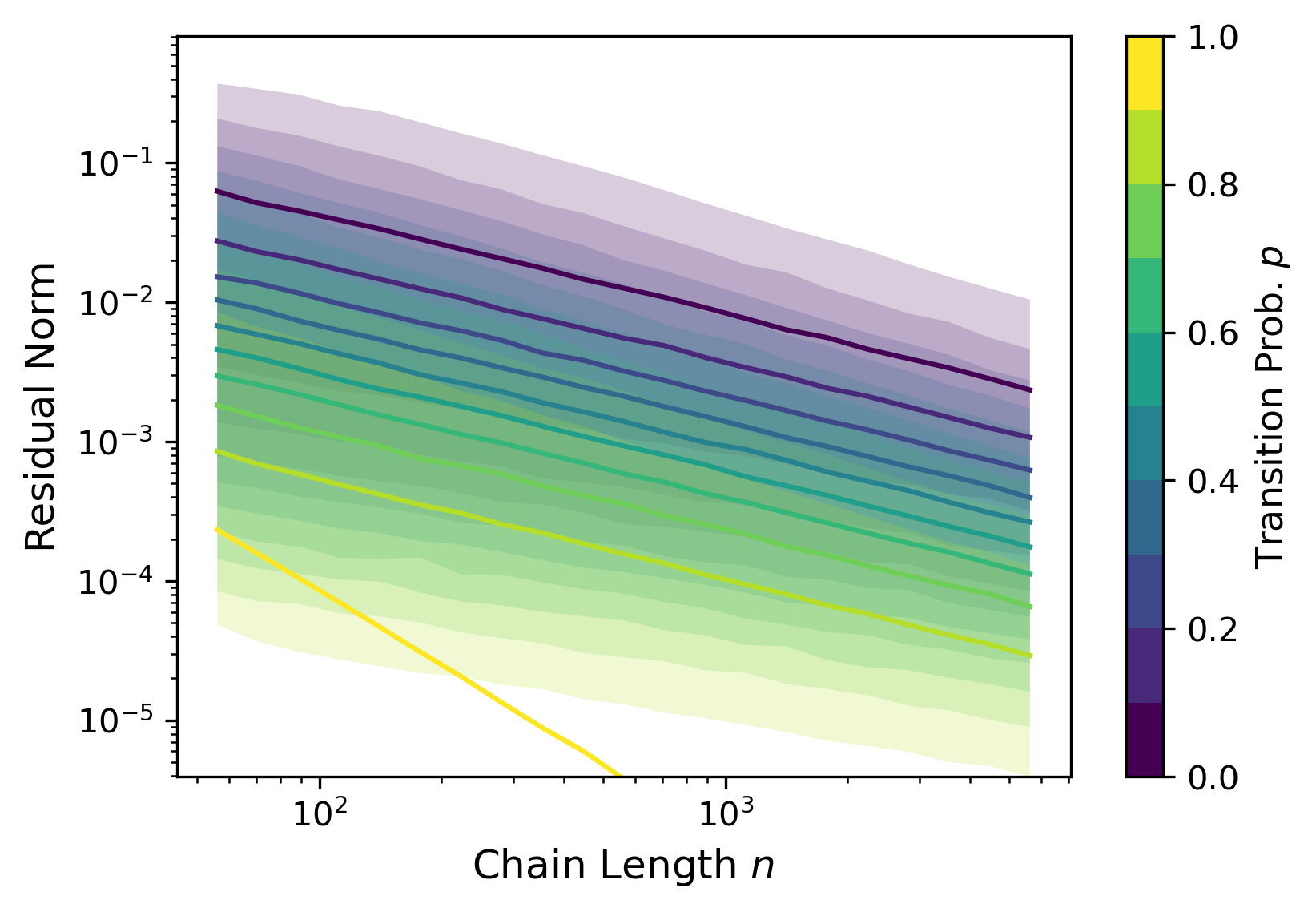}
        \caption{Sobolev-2 with $\tau = n^{-4/5}$.}
    \end{subfigure}
     \caption{Residual norm decays $\frac{1}{n}\norm{\Gamma_\tau \bm\gf}_2^2$, based on the optimal  $\tau_n$ selections, for different Sobolev-$\alpha$ kernels. Slight numerical inaccuracies are shown in the norm decay of the Sobolev-2 kernel. }
      \label{fig:new_decays}
\end{figure}

The Sobolev-$\alpha$ RKHS on $[0,1]$ has kernel defined by inner product
\[
\langle f, g\rangle_{\Hb^\alpha} = \sum_{k=0}^\alpha \int_0^1 f^{(k)}(x)\, g^{(k)}(x)\,dx.
\]
This Mercer kernel has eigenvalue decay $\lambda_i = i^{-2\alpha}$. 
For the standard KRR problem, a minimax optimal selection of the regularization parameter 
is given by $\tau_n \asymp n^{\frac{-2\alpha}{2\alpha + 1}}$. When plugged into the MSE expression, 
\begin{equation}
    \overline{\rm MSE}(\tau) = \frac{1}{n}\sum_{i=1}^n\frac{\tau^2 \ff_i^2}{(\lambda_i + \tau)^2} + \frac{\sigma^2}{n} \sum_{i=1}^n \frac{\lambda_i^2}{(\lambda_i + \tau)^2},
\end{equation}
we obtain $\overline{\rm MSE}(\tau_n) \asymp n^{\frac{-2\alpha}{2\alpha + 1}}$ which decays to zero as $n\to \infty$.
The resulting rate for $\xi_n = \lambda_n /\tau_n$ is
\[\xi_n \asymp n^{\frac{-4\alpha^2}{2\alpha + 1}} = o(1)\]
which satisfies the condition of Corollary~\ref{corr:asym_km_err}.
Tying back to Example~\ref{ex:1}, Figure~\ref{fig:new_decays} shows norm decay plots for $\frac{1}{n}\norm{\Gamma_\tau \bm\gf}_2^2$, 
when using $\tau_n$ minimax selections on the different Sobolev-$\alpha$ kernels. 

Similar to the case of the min-kernel in Figure~\ref{fig:marokv}, as the signal $\bm g^*$ becomes more rough, i.e. $p$ becomes larger, we see a quicker decay in filtered norm for the different Sobolev examples. Furthermore, these contributions are filtered at a faster rate for Sobolev kernels that are smoother, that is those with larger $\alpha$ values. This faster decay is not only intuitive but expected from our earlier derived $\xi_n$ decay rate. 

\section{Additional Experiments}\label{app:add:exper}

\begin{table}[t]
\centering
\begin{tabular}{ c c c l c l}
   \hline
   \multirow{2}{*}{Method} & \multirow{2}{*}{\# Seqs.} & \multicolumn{2}{c}{BrainWeb 4-class} & \multicolumn{2}{c}{BrainWeb 10-class }\\\cline{3-6}
   & & Acc. [\%] & Max Dev. [1] \rule[-0.5ex]{0ex}{2.5ex} &  Acc. [\%] & Max Dev. [1] \\\specialrule{.15em}{.05em}{.05em} 
   
   \multirow{2}{*}{\textsc{k-means}} & 1 & $73.62$ & $8.21\times 10^{-1}$\rule[-0.5ex]{0ex}{2.5ex} & $37.69$ & $7.08\times 10^{0}$ \\ \cline{2-6}
   
   & 3 & $74.38$ & $3.41\times 10^0$\rule[-0.5ex]{0ex}{2.5ex} & $44.21$ & $1.19\times 10^1$\\ \specialrule{.15em}{.05em}{.05em} 
   
   \multirow{2}{*}{\textsc{N4ITK + k-means}} & 1 & $74.10$ & $1.17\times 10^0$\rule[-0.5ex]{0ex}{2.5ex}& $40.14$ & $6.01\times 10^0$ \\ \cline{2-6}
   
   & 3 & $74.41$ & $3.91\times 10^0$\rule[-0.5ex]{0ex}{2.5ex}& $48.22$ & $1.11\times 10^1$\rule[-0.5ex]{0ex}{2.5ex} \\ \specialrule{.15em}{.05em}{.05em} 
   
   \multirow{2}{*}{\textsc{LapGM}} & 1 & $76.14$ & $2.87\times 10^0$ \rule[-0.5ex]{0ex}{2.5ex}  & $50.16$ & $\bm{2.47\times 10^0}$ \\ \cline{2-6}
   
   & 3 & $87.28$ & $4.08\times 10^0$\rule[-0.5ex]{0ex}{2.5ex}  & $78.38$ & $\bm{4.19\times 10^0}$\\ \specialrule{.15em}{.05em}{.05em} 
   
   \multirow{2}{*}{\textsc{AltMin}} & 1 & $\bm{91.07}$ &  $\bm{1.10\times 10^{-1}}$\rule[-0.5ex]{0ex}{2.5ex}  & $\bm{56.27}$ &  $4.49\times 10^{0}$\\ \cline{2-6}
   
   & 3 & $\bm{98.91}$ & $\bm{3.67\times 10^{-2}}$\rule[-0.5ex]{0ex}{2.5ex}  & $\bm{82.36}$ & $7.84\times 10^{0}$\\ \specialrule{.15em}{.05em}{.05em} 
\end{tabular}
\caption{Clustering results for different debiasing methods for single and multi-sequence settings.}\label{tab:bw_reduced}
\end{table}

We compare \altmin to other MRI debiasing techniques using the same biased phantom as Section~\ref{sec:mri_expmt}. For comparison, we consider a standard debiasing technique \textsc{N4ITK}~\citep{Tustison10} and a Bayesian modeling approach \textsc{LapGM}~\citep{Vinas22}. Hyperparameters for all methods, including \altmin, were selected using the same post-fitting process. Specific to \textsc{N4ITK}, bias estimates were calculated on the T1-sequence information and clusterings were calculated using an additional $k$-means estimation at the end of the debiasing procedure.

Performance of each method for the various recovery settings can be found in Table~\ref{tab:bw_reduced}. In each recovery setting, \altmin either meets or exceeds the classification and level accuracies of the other tested methods. We highlight that, for all debias methods, recovery is significantly more difficult in the 10-class setting. Methods which eventually scored well in this setting were those which could effectively leverage multi-sequence information during debias and clustering. This emphasizes the importance of replicated information for practical step and smooth recovery implementations.

\end{document}